\documentclass[12 pt]{article}%
\usepackage{amsmath, amsfonts, amsthm, color,latexsym}
\usepackage{amsmath}
\usepackage[all]{xy}
\usepackage{amsfonts}
\usepackage{amssymb}
\usepackage{color, soul}
\usepackage{graphicx}
\setcounter{MaxMatrixCols}{30}
\providecommand{\U}[1]{\protect\rule{.1in}{.1in}}
\newtheorem{theorem}{Theorem}[section]
\newtheorem{proposition}[theorem]{Proposition}
\newtheorem{corollary}[theorem]{Corollary}
\newtheorem{example}[theorem]{Example}

\newtheorem{remark}[theorem]{Remark}

\newtheorem{lemma}[theorem]{Lemma}
\newtheorem{final remark}[theorem]{Final Remark}
\newtheorem{definition}[theorem]{Definition}
\textwidth=16.1cm
\textheight=23cm
\hoffset=-15mm
\voffset=-20mm

\begin{document}

\title{Operator ideals related to absolutely summing and Cohen strongly summing operators}
\author{Geraldo Botelho\thanks{Supported by CNPq Grant
305958/2014-3 and Fapemig Grant PPM-00490-15.}\,\,,~ Jamilson R. Campos\thanks{Supported
by a CAPES Postdoctoral scholarship.}~ and Joedson Santos\thanks{Supported by CNPq (Edital Universal 14/2012).\thinspace \hfill\newline\indent2010 Mathematics Subject
Classification: 46B45, 47B10, 47L20.\newline\indent Key words: Banach sequence spaces, operator ideals, summing operators.}}
\date{}
\maketitle

\begin{abstract} We study the ideals of linear operators between Banach spaces determined by the transformation of vector-valued sequences involving the new sequence space introduced by Karn and Sinha \cite{karnsinha} and the classical spaces of absolutely, weakly and Cohen strongly summable sequences. As applications, we prove a new factorization theorem for absolutely summing operators and a contribution to the existence of infinite dimensional spaces formed by non-absolutely summing operators is given.
\end{abstract}

\section*{Introduction and background}

In the theory of ideals of linear operators between Banach spaces (operator ideals), a central role is played by classes of operators that improve the convergence of series, which are usually defined or characterized by the transformation of vector-valued sequences. The most famous of such classes is the ideal of absolutely $p$-summing linear operators, which are the ones that send weakly $p$-summable sequences to absolutely $p$-summable sequences. The celebrated monograph \cite{djt} is devoted to the study of absolutely summing operators.

For a Banach space $E$, let $\ell_p(E)$, $\ell_p^w(E)$ and $\ell_p\langle E \rangle$ denote the spaces of absolutely, weakly and Cohen strongly $p$-summable $E$-valued sequences, respectively. Karn and Sinha \cite{karnsinha} recently introduced a space $\ell_p^{mid}(E)$ of $E$-valued sequences such that
  \begin{equation}\label{inc}
\ell_p\langle E \rangle \subseteq \ell_p(E) \subseteq \ell_p^{mid}(E) \subseteq \ell_p^{w}(E).
\end{equation}
In the realm of the theory of operator ideals, it is a natural step to study the classes of operators $T \colon E \longrightarrow F$ that send: (i) sequences in $\ell_p^w(E)$ to sequences in $\ell_p^{mid}(F)$, (ii) sequences in $\ell_p^{mid}(E)$ to sequences in $\ell_p(F)$, (iii) sequences in $\ell_p^{mid}(E)$ to sequences in $\ell_p\langle F \rangle$. This is the basic motivation of this paper.

We start by taking a closer look at the space $\ell_p^{mid}(E)$ in Section \ref{sec1}. First we give it a norm that makes it a Banach space. Next we consider the relationship with the space $\ell_p^u(E)$ of unconditionally $p$-summable $E$-valued sequences. We show that, although (\ref{inc}) and $\ell_p(E) \subseteq \ell_p^u(E) \subseteq \ell_p^w (E)$ hold for every $E$, in general $\ell_p^{mid}(E)$ and $\ell_p^u(E)$ are not comparable. It is also proved that the correspondence $E \mapsto \ell_p^{mid}(E)$ enjoys a couple of desired properties in the context of operator ideals.

In Section \ref{sec2} we prove that the classes of operators described in (i), (ii) and (iii) above are Banach operator ideals. Characterizations of each class and their corresponding norms are given and properties of each ideal are proved. We establish a factorization theorem for absolutely summing operators and a question left open in \cite{karnsinha} is settled. In both sections \ref{sec1} and \ref{sec2} we study Banach spaces $E$ for which $\ell_p(E) = \ell_p^{mid}(E)$ or $\ell_p^{mid}(E) = \ell_p^w(E)$.

In Section \ref{sec3} we give an application to the existence of infinite dimensional Banach spaces formed, up to the null operator, by non-absolutely summing linear operators on non-superreflexive spaces.

Let us define the classical sequences spaces we shall work with:\\
$\bullet$ $\ell_p(E) $ = absolutely $p$-summable $E$-valued sequences with the usual norm $\|\cdot\|_p$;\\
$\bullet$ $\ell_p^w(E)$ = weakly $p$-summable $E$-valued sequences with the norm
$$\|(x_j)_{j=1}^\infty\|_{w,p} = \sup_{x^* \in B_{E*}}\|(x^*(x_j))_{j=1}^\infty\|_p; $$
$\bullet$ $\ell_p^u(E) = \left\{(x_j)_{j=1}^\infty \in \ell_p^w(E) : \displaystyle\lim_k \|(x_j)_{j=k}^\infty\|_{w,p} = 0 \right\}$ with the norm inherited from $\ell_p^w(E)$ (unconditionally $p$-summable sequences, see \cite[8.2]{df});\\
$\bullet$ $\displaystyle\ell_p\langle E \rangle  = \left\{(x_j)_{j=1}^\infty \in E^{\mathbb{N}}: \|(x_j)_{j=1}^\infty\|_{C,p}:= \sup_{(x_j^*)_{j=1}^\infty \in B_{\ell_{p^*}^w(E^*)}} \|(x_j^*(x_j))_{j=1}^\infty\|_1< +\infty\right\}$,\\ where $\frac{1}{p} + \frac{1}{p^*} = 1$,  (Cohen strongly $p$-summable sequences or strongly $p$-summable sequences, see, e.g., \cite{cohen73}).

The letters $E,F$ shall denote Banach spaces over $\mathbb{K} = \mathbb{R}$ or $\mathbb{C}$. The closed unit ball of $E$ is denoted by $B_E$ and its topological dual by $E^*$. The symbol $E \stackrel{1}{\hookrightarrow} F$ means that $E$ is a linear subspace of $F$ and $\|x\|_F \leq \|x\|_E$ for every $x \in E$. By ${\cal L}(E;F)$ we denote the Banach space of all continuous linear operators $T \colon E \longrightarrow F$ endowed with the usual sup norm. By $\Pi_{p;q}$  we denote the ideal of absolutely $(p;q)$-summing linear operators \cite{df,djt}. If $p=q$ we simply write $\Pi_{p}$. The ideal of Cohen strongly $p$-summing linear operators \cite{jamilson13,cohen73} shall be denoted by ${\cal D}_{p}$. We use the standard notation of the theory of operator ideals \cite{df, pietschlivro}.

\section{The space $\ell_p^{mid}(E)$}\label{sec1}

In this section we give the space of sequences defined in Karn and Sinha \cite{karnsinha} a norm that makes it a Banach space and establish some useful properties of this space.

The vector-valued sequences introduced in \cite{karnsinha} are called \emph{operator $p$-summable sequences}. This term is quite inconvenient for our purposes, and considering the intermediate position of the space formed by such sequences between $\ell_p(E)$ and $\ell_p^w(E)$ (see (\ref{inc})), we shall use the term \emph{mid $p$-summable sequences}. Instead of the original definition, we shall use a characterization proved in \cite[Lemma 2.3, Proposition 2.4]{karnsinha}:

\begin{definition}\rm A sequence $(x_j)_{j=1}^\infty$ in a Banach space $E$ is said to be \emph{mid $p$-summable}, $1 \leq  p  <  \infty$, if $(x_j)_{j=1}^\infty \in \ell_p^w(E)$ and $\left(\left(x_n^*\left(x_j\right)\right)_{j=1}^\infty\right)_{n=1}^\infty \in \ell_p\left(\ell_p\right)$ whenever $(x_n^*)_{n=1}^\infty \in \ell_p^{w}(E^{*})$. The space of all such sequences shall be denoted by $\ell_p^{mid}(E)$.
\end{definition}

The following {\it extreme} cases will be important throughout the paper:

\begin{theorem}{\rm \cite[Proposition 3.1 and Theorem 4.5]{karnsinha})} \label{coinc} Let $E$ be a Banach space and $1 \leq p < \infty$. Then:\\
{\rm (i)} $\ell_p^{mid}(E) = \ell_p^{w}(E)$ if and only if $\Pi_p(E;\ell_p) = {\cal L}(E;\ell_p)$.\\
{\rm (ii)} $\ell_p^{mid}(E) = \ell_p(E)$ if and only if $E$ is a subspace of $L_p(\mu)$ for some Borel measure $\mu$.
\end{theorem}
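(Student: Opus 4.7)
The overall strategy is to translate the mid-$p$-summability condition into a condition on operators. Given $(x_j) \in \ell_p^w(E)$, define $u \colon E^* \to \ell_p$ by $u(f) = (f(x_j))_j$; boundedness of $u$ is equivalent to weak $p$-summability of $(x_j)$, with $\|u\| = \|(x_j)\|_{w,p}$. By Fubini, for any $(f_n)_n \subset E^*$,
\[
\sum_{n,j} |f_n(x_j)|^p \;=\; \sum_n \|u(f_n)\|_p^p,
\]
so $(x_j) \in \ell_p^{mid}(E)$ exactly when $u$ maps $\ell_p^w(E^*)$ into $\ell_p(\ell_p)$, i.e.\ when $u$ is absolutely $p$-summing. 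Dually, every $T \in \mathcal{L}(E, \ell_p)$ has the form $T(x) = (f_n(x))_n$ with $f_n := T^* e_n$, and the assignment $(f_n) \mapsto T$ is an isometric bijection between $\ell_p^w(E^*)$ and $\mathcal{L}(E, \ell_p)$.

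For part (i), combining the two translations, $\ell_p^{mid}(E) = \ell_p^w(E)$ is the statement that for every $(x_j) \in \ell_p^w(E)$ and every $(f_n) \in \ell_p^w(E^*)$,
\[
\sum_j \|T_{(f_n)}x_j\|_p^p \;=\; \sum_{n,j} |f_n(x_j)|^p \;<\; \infty,
\]
where $T_{(f_n)}$ is the associated element of $\mathcal{L}(E,\ell_p)$. Since every $T \in \mathcal{L}(E,\ell_p)$ arises in this way, this identity is equivalent to every such $T$ being absolutely $p$-summing, i.e.\ to $\Pi_p(E;\ell_p) = \mathcal{L}(E;\ell_p)$. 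Both implications are then immediate, and I foresee no real obstacle in (i).

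For the ``$(\Leftarrow)$'' direction of (ii), I would first use Hahn--Banach to observe that restricting an $\ell_p^w(L_p(\mu)^*)$-sequence to $E^*$ preserves weak $p$-summability, so that $\ell_p^{mid}(E) \subseteq \ell_p^{mid}(L_p(\mu)) \cap E^{\mathbb{N}}$; since also $\ell_p(E) = \ell_p(L_p(\mu)) \cap E^{\mathbb{N}}$, the problem reduces to proving the equality in the concrete case $E = L_p(\mu)$. There, for a given $(f_j) \in \ell_p^{mid}(L_p(\mu))$, I would try to construct a distinguished test sequence $(g_n) \in \ell_p^w(L_{p^*}(\mu))$ whose pairings with $(f_j)$ dominate $\sum_j \|f_j\|_p^p$; natural candidates are Rademacher-type systems (yielding square-function estimates via Khintchine's inequality) or tensor products of the extremal functionals $|f_j|^{p-1}\,\mathrm{sgn}(f_j)$ with an orthonormal sequence. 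The main obstacle is precisely this step: realizing the $\ell_p(L_p)$-norm as a sum $\sum_{n,j} |\langle g_n, f_j\rangle|^p$ controlled by an a priori weakly $p$-summable test family is subtle and likely requires separate arguments for $1 \le p < 2$ and $p \ge 2$.

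For the ``$(\Rightarrow)$'' direction of (ii), the operator translation from the first paragraph recasts the hypothesis as: any $u \colon E^* \to \ell_p$ of the form $u(f) = (f(x_j))_j$ that is absolutely $p$-summing must already satisfy $(x_j) \in \ell_p(E)$. I would apply Pietsch's domination theorem to such $u$ to obtain a probability measure $\nu$ on the weak-$*$ compact ball $B_{E^{**}}$ and a corresponding factorization through $L_p(\nu)$. By exhausting such sequences $(x_j)$ with a family dense enough to separate points of $E$, one should then synthesize an isomorphic embedding $E \hookrightarrow L_p(\nu')$ for a suitable measure $\nu'$. This structural last step is the other main conceptual difficulty and is where I expect one has to invoke a known characterization of subspaces of $L_p$.
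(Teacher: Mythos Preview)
The paper does not give its own proof of this theorem: it is quoted verbatim from Karn and Sinha \cite[Proposition~3.1 and Theorem~4.5]{karnsinha} and used as a black box throughout. So there is no argument in the paper to compare your proposal against.

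On its own merits: your treatment of part~(i) is correct and is essentially the standard dictionary argument. Once one uses the isometric identification $\ell_p^w(E^*) \cong \mathcal{L}(E;\ell_p)$ (which the present paper also invokes, e.g.\ in the proofs of Theorem~\ref{revis} and Proposition~\ref{propcomp}), the equality $\ell_p^{mid}(E)=\ell_p^w(E)$ and the equality $\Pi_p(E;\ell_p)=\mathcal{L}(E;\ell_p)$ both unwind to the single symmetric condition $\sum_{n,j}|f_n(x_j)|^p<\infty$ for all $(x_j)\in\ell_p^w(E)$ and $(f_n)\in\ell_p^w(E^*)$.

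For part~(ii), however, what you have written is an outline with explicitly acknowledged gaps rather than a proof. In the $(\Leftarrow)$ direction you correctly reduce to $E=L_p(\mu)$ but do not actually produce the test sequence; in the $(\Rightarrow)$ direction you apply Pietsch domination to individual operators $u$ attached to single sequences $(x_j)$, but obtaining one embedding $E\hookrightarrow L_p(\nu)$ from this collection of local factorizations is precisely the nontrivial structural step, and your proposal does not carry it out. In Karn and Sinha's paper part~(ii) is obtained via a known characterization of subspaces of $L_p$ (finite representability / $\mathrm{SQ}_p$-type criteria), and you would need to invoke or reproduce such a criterion to close the argument.
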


We say that a Banach space $E$ is a {\it weak mid $p$-space} if $\ell_p^{mid}(E) = \ell_p^{w}(E)$; and it is a {\it strong mid $p$-space} if $\ell_p^{mid}(E) = \ell_p(E)$.

The space $\ell_p^{mid}(E)$ is not endowed with a norm in \cite{karnsinha}. Our first goal in this section is to give it a useful complete norm. Let us see first that the norm inherited from $\ell_p^w(E)$ is helpless. We believe the next lemma is folklore; we give a short proof because we have found no reference to quote. As usual, $c_{00}(E)$ means the space of finite (or eventually null) $E$-valued sequences.

\begin{lemma}\label{nnll} If $E$ is infinite dimensional, then the norms $\|\cdot\|_p$ and $\|\cdot\|_{w,p}$ are not equivalent on $c_{00}(E)$. In particular, $\ell_p(E)$ is not closed in $\ell_p^w(E)$.
\end{lemma}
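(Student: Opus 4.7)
The plan is to argue by contradiction and reduce everything to the Dvoretzky--Rogers theorem. Suppose that $\|\cdot\|_p$ and $\|\cdot\|_{w,p}$ are equivalent on $c_{00}(E)$, so that there exists a constant $C > 0$ with
$$
\left(\sum_{j=1}^n \|x_j\|^p\right)^{1/p} \leq C \sup_{x^* \in B_{E^*}} \left(\sum_{j=1}^n |x^*(x_j)|^p\right)^{1/p}
$$
for every finite family $x_1, \dots, x_n \in E$. By the standard finite-sequence characterization of $p$-summing operators (see \cite[Proposition 2.1]{djt}), this inequality says exactly that $\mathrm{id}_E \in \Pi_p(E;E)$.

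The next step is to invoke (the $p$-version of) the Dvoretzky--Rogers theorem, which asserts that $\mathrm{id}_E$ is absolutely $p$-summing if and only if $\dim E < \infty$. This contradicts the assumption that $E$ is infinite dimensional, and settles the first assertion.

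For the ``in particular'' claim, I would use the open mapping theorem. Assume $\ell_p(E)$ is closed in $\ell_p^w(E)$; then $(\ell_p(E), \|\cdot\|_{w,p})$ is itself a Banach space. Since $(\ell_p(E), \|\cdot\|_p)$ is also Banach and the formal identity $(\ell_p(E),\|\cdot\|_p) \longrightarrow (\ell_p(E), \|\cdot\|_{w,p})$ is a continuous bijection (indeed $\|\cdot\|_{w,p} \leq \|\cdot\|_p$), Banach's isomorphism theorem forces the two norms to be equivalent on $\ell_p(E)$, and thus on the subspace $c_{00}(E)$, contradicting the first part.

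The only genuinely non-trivial ingredient is the Dvoretzky--Rogers theorem; everything else is bookkeeping about the definitions and a one-line appeal to the open mapping theorem. I do not foresee any real obstacle, only the need to cite the correct form of Dvoretzky--Rogers for general $p \geq 1$ (e.g.\ from \cite[Theorem 1.2]{djt} combined with the observation that if $\mathrm{id}_E \in \Pi_p$ then $\mathrm{id}_E \in \Pi_q$ for all $q \geq p$, or a direct reference to the $p$-summing version).
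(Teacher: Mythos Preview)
Your proof is correct and follows essentially the same route as the paper: both argue that equivalence of $\|\cdot\|_p$ and $\|\cdot\|_{w,p}$ on $c_{00}(E)$ forces $\mathrm{id}_E\in\Pi_p(E;E)$, whence $E$ is finite dimensional by Dvoretzky--Rogers, and both obtain the second assertion from the Open Mapping Theorem. The only (minor) difference is that the paper reaches $\mathrm{id}_E\in\Pi_p$ via a density detour through $\ell_p^u(E)$ (showing $\ell_p(E)=\ell_p^u(E)$ and then citing \cite[Proposition~11.1(c)]{df}), whereas you read the $p$-summing condition directly from the finite-sequence inequality---your route is slightly more direct.
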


\begin{proof} It is clear that $c_{00}(E)$ is dense in $(\ell_p(E), \|\cdot\|_p)$, and the definition of $\ell_p^u(E)$ makes clear that $c_{00}(E)$ is dense in $(\ell_p^u(E), \|\cdot\|_{w,p})$ as well. Assume that the norms $\|\cdot\|_p$ and $\|\cdot\|_{w,p}$ are equivalent on $c_{00}(E)$.
Then
$$ \ell_p(E) =\overline{c_{00}(E)}^{\|\cdot\|_{p}} = \overline{c_{00}(E)}^{\|\cdot\|_{w,p}} = \ell_p^u(E). $$
It follows that the identity operator in $E$ is absolutely $p$-summing \cite[Proposition 11.1(c)]{df}, hence $E$ is finite dimensional. Now the second assertion follows from the Open Mapping Theorem and the inclusion $c_{00}(E) \subseteq \ell_p(E)$.
\end{proof}

From Theorem \ref{coinc}(ii) we know that $\ell_p^{mid}(\ell_p) = \ell_p(\ell_p)$, so  $\ell_p^{mid}(\ell_p)$ \emph{is not} closed in $\ell_p^{w}(\ell_p)$ by Lemma \ref{nnll}, proving that $\|\cdot\|_{w,p}$ does not make $\ell_p^{mid}(E)$ complete in general.

\begin{proposition} The expression
\begin{equation}\label{normmid}
\|(x_j)_{j=1}^\infty\|_{mid,p} := \sup_{(x_n^*)_{n=1}^\infty \in B_{\ell_p^w(E^{*})}} \left(\sum_{n=1}^\infty \sum_{j=1}^\infty |x_n^*(x_j)|^p\right)^{1/p},
\end{equation}
is a norm that makes $\ell_p^{mid}(E)$ a Banach space and
$\ell_p(E) \stackrel{1}{\hookrightarrow} \ell_p^{mid}(E) \stackrel{1}{\hookrightarrow} \ell_p^{w}(E).$
\end{proposition}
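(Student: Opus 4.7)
My plan is to verify the inclusions with constants first (which are easy and give for free that $\|\cdot\|_{mid,p}$ dominates $\|\cdot\|_{w,p}$), then handle the subtler point that $\|\cdot\|_{mid,p}$ is actually finite on $\ell_p^{mid}(E)$, and finally prove completeness via a standard Fatou-type argument.

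\textbf{Step 1: Inclusions with constants.} For $(x_j)_{j=1}^\infty \in \ell_p(E)$ and $(x_n^*)_{n=1}^\infty \in B_{\ell_p^w(E^*)}$, I use that
$$\sum_{n=1}^\infty |x_n^*(x_j)|^p \leq \|x_j\|^p \, \|(x_n^*)_n\|_{w,p}^p \leq \|x_j\|^p,$$
since evaluating at the functional $\widehat{x_j}/\|x_j\| \in B_{E^{**}}$ is a legal test. Summing over $j$ and taking the $p$-th root gives $\|(x_j)\|_{mid,p} \leq \|(x_j)\|_p$. Conversely, for $(x_j) \in \ell_p^{mid}(E)$ and any $x^* \in B_{E^*}$, the sequence $(x^*,0,0,\dots)$ lies in $B_{\ell_p^w(E^*)}$, so
$$\sum_{j=1}^\infty |x^*(x_j)|^p \leq \|(x_j)\|_{mid,p}^p,$$
whence $\|(x_j)\|_{w,p} \leq \|(x_j)\|_{mid,p}$. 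These give the two ``$\stackrel{1}{\hookrightarrow}$'' and, once we know $\|\cdot\|_{mid,p}$ is a norm, positive definiteness (if $\|(x_j)\|_{mid,p} = 0$ then $\|(x_j)\|_{w,p} = 0$, forcing each $x_j = 0$). Homogeneity and subadditivity are immediate from the triangle inequality for $\|\cdot\|_p$ in $\ell_p(\ell_p)$.

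\textbf{Step 2: Finiteness of $\|\cdot\|_{mid,p}$.} This is the only non-routine point. For a fixed $(x_j) \in \ell_p^{mid}(E)$, define
$$T \colon \ell_p^w(E^*) \longrightarrow \ell_p(\ell_p), \quad T\bigl((x_n^*)_n\bigr) = \bigl((x_n^*(x_j))_j\bigr)_n;$$
well-definedness of $T$ is exactly the definition of $\ell_p^{mid}(E)$, and linearity is clear. I will apply the closed graph theorem. If $(x_n^{*,k})_n \to (x_n^*)_n$ in $\ell_p^w(E^*)$ and $T((x_n^{*,k})_n) \to (a_{n,j})_{n,j}$ in $\ell_p(\ell_p)$, then both convergences force coordinatewise convergence (since norm convergence in $\ell_p^w(E^*)$ dominates pointwise convergence of each $x_n^{*,k}$, and norm convergence in $\ell_p(\ell_p)$ dominates entry-wise convergence). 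Thus $a_{n,j} = x_n^*(x_j)$ and the graph is closed. Consequently $T$ is bounded, which is precisely $\|(x_j)\|_{mid,p} = \|T\| < \infty$.

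\textbf{Step 3: Completeness.} Let $((x_j^k)_j)_k$ be Cauchy in $(\ell_p^{mid}(E), \|\cdot\|_{mid,p})$. By the inclusion proved in Step 1, it is also Cauchy in the complete space $(\ell_p^w(E), \|\cdot\|_{w,p})$, hence converges to some $(x_j) \in \ell_p^w(E)$; in particular $x_j^k \to x_j$ in norm in $E$ for each $j$. Given $\varepsilon > 0$, pick $k_0$ with $\|(x_j^k) - (x_j^\ell)\|_{mid,p} < \varepsilon$ for all $k,\ell \geq k_0$, so for every $(x_n^*) \in B_{\ell_p^w(E^*)}$ and every $k,\ell \geq k_0$,
$$\sum_{n=1}^\infty \sum_{j=1}^\infty |x_n^*(x_j^k - x_j^\ell)|^p < \varepsilon^p.$$
Letting $\ell \to \infty$, Fatou's lemma (applied to the counting measure on $\mathbb{N} \times \mathbb{N}$) gives
$$\sum_{n=1}^\infty \sum_{j=1}^\infty |x_n^*(x_j^k - x_j)|^p \leq \varepsilon^p$$
for every $k \geq k_0$ and every $(x_n^*) \in B_{\ell_p^w(E^*)}$. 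This shows that $(x_j^k - x_j) \in \ell_p^{mid}(E)$ with $\|\cdot\|_{mid,p}$-norm $\leq \varepsilon$, hence $(x_j) = (x_j^k) - (x_j^k - x_j) \in \ell_p^{mid}(E)$ and $(x_j^k) \to (x_j)$ in $\|\cdot\|_{mid,p}$.

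The only subtle step is Step 2, where one must resist the temptation to think the supremum is obviously finite from the pointwise finiteness of the double series; the closed graph argument is what actually delivers uniformity.
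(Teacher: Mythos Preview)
Your proof is correct and follows essentially the same approach as the paper: the paper likewise uses the Closed Graph Theorem on the operator $T_x\colon \ell_p^w(E^*)\to\ell_p(\ell_p)$ to obtain finiteness of the supremum, and proves the two inclusions exactly as you do (testing against $(\varphi,0,0,\ldots)$ for $\|\cdot\|_{w,p}\leq\|\cdot\|_{mid,p}$, and normalizing $x_j$ for $\|\cdot\|_{mid,p}\leq\|\cdot\|_p$). The paper simply asserts that completeness follows from ``a canonical argument,'' whereas you spell out the Fatou-type proof, which is precisely what is meant.
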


\begin{proof} Let $x=(x_j)_{j=1}^\infty \in \ell_p^{mid}(E)$. By definition, the double series in (\ref{normmid}) is convergent (this is why we chose this condition to be the definition of $\ell_p^{mid}(E)$). The map
$$T_x \colon \ell_p^w(E^{*}) \longrightarrow \ell_p(\ell_p)~,~T_x((x_n^*)_{n=1}^\infty) = \left(\left(x_n^*\left(x_j\right)\right)_{j=1}^\infty\right)_{n=1}^\infty$$
is a well defined linear operator. By the Closed Graph Theorem, it
is continuous. Therefore,
\[\left(\sum_{n=1}^\infty \sum_{j=1}^\infty |x_n^*(x_j)|^p\right)^{1/p} = \|T_x((x_n^*)_{n=1}^\infty)\| \leq \|T_x\| \cdot\|(x_n^*)_{n=1}^\infty\|_{w,p} \]
for every $(x_n^*)_{n=1}^\infty \in {\ell_p^w(E^{*})}$, showing that the supremum in (\ref{normmid}) is finite. Straightforward computations prove that $\|\cdot\|_{mid,p}$ is a norm and a canonical argument shows that $\left(\ell_p^{mid}(E),\|\cdot\|_{mid,p}\right)$ is a Banach space.

For every $\varphi \in B_{E^{*}}$, it is clear that $(\varphi,0,0,\ldots) \in B_{\ell_p^w(E^{*})}$, so $\|\cdot \|_{w,p} \leq \|\cdot\|_{mid,p}$ in $\ell_p^{mid}(E)$.

Let $(x_j)_{j=1}^\infty \in \ell_p(E)$ and $(x_n^*)_{n=1}^\infty \in \ell_p^w(E^{'})$. Using that $B_E$, regarded as a subspace of $E^{**}$, is a norming subset of $E^{**}$, we have $\|(x_n^{*})_{n=1}^\infty\|_{w,p}^p = \sup\limits_{x \in B_E} \sum\limits_{n=1}^\infty |x_n^*(x)|^p$. Putting $J = \{j \in \mathbb{N} : x_j \neq 0\}$, we have
\begin{align*}\sum_{j=1}^\infty \sum_{n=1}^\infty |x_n^*(x_j)|^p &=\sum_{j \in J}\left(\|x_j\|^p \cdot \left( \sum_{n=1}^\infty \left|x_n^*\left(\frac{x_j}{\|x_j\|}\right)\right|^p\right)\right) \\&\leq \|(x_n^*)_{n=1}^\infty\|_{w,p}^p\cdot \sum_{j\in J} \|x_j\|^p = \|(x_n^*)_{n=1}^\infty\|_{w,p}^p\cdot \sum_{j=1}^\infty \|x_j\|^p,
\end{align*}
from which the inequality
$\|\cdot\|_{mid,p} \leq \|\cdot\|_{p}$ follows.
\end{proof}

\begin{proposition} The following are equivalent for a weak mid-$p$-space $E$:\\
{\rm (a)} $\ell_p(E)$ is closed in $\ell_p^{mid}(E)$.\\
{\rm (b)} The norms $\|\cdot\|_p$ and $\|\cdot\|_{mid,p}$ are equivalent on $\ell_p(E)$.\\
{\rm (c)} The norms $\|\cdot\|_p$ and $\|\cdot\|_{mid,p}$ are equivalent on $c_{00}(E)$.\\
{\rm (d)} $E$ is finite dimensional.
\end{proposition}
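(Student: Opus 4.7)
The plan is to prove the equivalences via the cycle (d) $\Rightarrow$ (a) $\Rightarrow$ (b) $\Rightarrow$ (c) $\Rightarrow$ (d). Three of these implications are essentially formal; the real content, and the only place where the weak mid-$p$ hypothesis enters, lies in (c) $\Rightarrow$ (d), which will reduce to Lemma \ref{nnll}.

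The implication (d) $\Rightarrow$ (a) is immediate: in finite dimensions, $\ell_p(E) = \ell_p^w(E) = \ell_p^{mid}(E)$, so $\ell_p(E)$ is trivially closed in $\ell_p^{mid}(E)$. Similarly (b) $\Rightarrow$ (c) is immediate from the inclusion $c_{00}(E) \subseteq \ell_p(E)$. For (a) $\Rightarrow$ (b), I would appeal to the Open Mapping Theorem: if $\ell_p(E)$ is closed in the Banach space $(\ell_p^{mid}(E), \|\cdot\|_{mid,p})$, then $(\ell_p(E), \|\cdot\|_{mid,p})$ is itself a Banach space; since $(\ell_p(E), \|\cdot\|_p)$ is a Banach space and the previous proposition gives $\|\cdot\|_{mid,p} \leq \|\cdot\|_p$ on $\ell_p(E)$, the identity is a continuous bijection between two Banach spaces, hence an isomorphism.

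The heart of the argument is (c) $\Rightarrow$ (d). Here I invoke the weak mid-$p$ hypothesis: $\ell_p^{mid}(E) = \ell_p^w(E)$ as sets, both are Banach spaces under their respective norms, and $\|\cdot\|_{w,p} \leq \|\cdot\|_{mid,p}$ by the preceding proposition. The Open Mapping Theorem applied to the identity on $\ell_p^w(E) = \ell_p^{mid}(E)$ therefore forces $\|\cdot\|_{w,p}$ and $\|\cdot\|_{mid,p}$ to be equivalent on all of $\ell_p^w(E)$, in particular on $c_{00}(E)$. Combining this with (c) yields equivalence of $\|\cdot\|_p$ and $\|\cdot\|_{w,p}$ on $c_{00}(E)$, and Lemma \ref{nnll} then forces $E$ to be finite dimensional.

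The main (and really, only) obstacle is recognizing the trick in (c) $\Rightarrow$ (d): one must use the weak mid-$p$ hypothesis not on $\ell_p(E)$ directly, but to upgrade $\|\cdot\|_{mid,p}$ and $\|\cdot\|_{w,p}$ to equivalent norms on the ambient space via Open Mapping, thereby reducing the problem precisely to the situation already handled in Lemma \ref{nnll}. Once that reduction is made, everything else is standard completeness and density bookkeeping.
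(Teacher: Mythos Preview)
Your proof is correct and follows essentially the same route as the paper: the cycle (d) $\Rightarrow$ (a) $\Rightarrow$ (b) $\Rightarrow$ (c) $\Rightarrow$ (d), with (a) $\Rightarrow$ (b) via the Open Mapping Theorem and the substantive step (c) $\Rightarrow$ (d) handled by using the weak mid-$p$ hypothesis plus Open Mapping to make $\|\cdot\|_{mid,p}$ and $\|\cdot\|_{w,p}$ equivalent on $c_{00}(E)$, then invoking Lemma~\ref{nnll}. Your write-up is in fact more explicit than the paper's in justifying the Open Mapping applications.
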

\begin{proof} (a) $\Longrightarrow$ (b) follows from the Open Mapping Theorem, (b) $\Longrightarrow$ (c) and (d) $\Longrightarrow$ (a) are obvious. Let us prove (c) $\Longrightarrow$ (d): Since $E$ is a weak mid $p$-space, the norms $\|\cdot\|_{mid,p}$ and $\|\cdot\|_{w,p}$ are equivalent on $\ell_p^{mid}(E)$ by the Open Mapping Theorem, hence they are equivalent on $c_{00}(E)$. The assumption gives that the norms $\|\cdot\|_p$ and $\|\cdot\|_{w,p}$ are equivalent on  $c_{00}(E)$. By Lemma \ref{nnll} it follows that $E$ is finite dimensional.
\end{proof}
Analogously, we have:
\begin{proposition} The following are equivalent for a strong mid-$p$-space $E$:\\
{\rm (a)} $\ell_{mid}(E)$ is closed in $\ell_p^{w}(E)$.\\
{\rm (b)} The norms $\|\cdot\|_{mid,p}$ and $\|\cdot\|_{w,p}$ are equivalent on $\ell_p^{mid}(E)$.\\
{\rm (c)} The norms $\|\cdot\|_{mid,p}$ and $\|\cdot\|_{w,p}$ are equivalent on $c_{00}(E)$.\\
{\rm (d)} $E$ is finite dimensional.
\end{proposition}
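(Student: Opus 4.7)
The plan is to mimic the argument of the preceding proposition, swapping the roles of the two extreme cases: there the hypothesis was $\ell_p^{mid}(E)=\ell_p^w(E)$ and one reduced to $\|\cdot\|_p\sim\|\cdot\|_{w,p}$ on $c_{00}(E)$ through the middle norm; here the hypothesis is $\ell_p^{mid}(E)=\ell_p(E)$ and one should reduce in the opposite direction.

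First I would dispose of the easy implications. The implication (a) $\Longrightarrow$ (b) is a direct application of the Open Mapping Theorem: the inclusion $(\ell_p^{mid}(E),\|\cdot\|_{mid,p}) \stackrel{1}{\hookrightarrow} (\ell_p^w(E),\|\cdot\|_{w,p})$ is continuous, both sides are complete once (a) holds (the target because it is always complete, the source because it equals a closed subspace of the former), and the inclusion is a bijection onto its image, so its inverse is bounded. The implication (b) $\Longrightarrow$ (c) is immediate because $c_{00}(E)\subseteq\ell_p(E)=\ell_p^{mid}(E)$ under the strong mid-$p$ assumption, and (d) $\Longrightarrow$ (a) is trivial since in finite dimension all three sequence spaces coincide as sets with equivalent norms.

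The only content lies in (c) $\Longrightarrow$ (d), and this is where I would use the hypothesis that $E$ is a strong mid-$p$-space. Since $\ell_p(E)=\ell_p^{mid}(E)$ and both $\|\cdot\|_p$ and $\|\cdot\|_{mid,p}$ are complete norms on this common space with $\|\cdot\|_{mid,p}\le\|\cdot\|_p$ (from the previous proposition), the Open Mapping Theorem gives that $\|\cdot\|_p$ and $\|\cdot\|_{mid,p}$ are equivalent on $\ell_p(E)$, in particular on $c_{00}(E)$. Combining this with assumption (c), the norms $\|\cdot\|_p$ and $\|\cdot\|_{w,p}$ are equivalent on $c_{00}(E)$, and Lemma \ref{nnll} then forces $E$ to be finite dimensional.

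I do not anticipate a real obstacle: the argument is entirely parallel to that of the previous proposition, with the strong mid-$p$ hypothesis taking over the role that was played by the weak mid-$p$ hypothesis, and Lemma \ref{nnll} again serving as the final step. The one point to be careful about is to apply the Open Mapping Theorem in the correct direction at the start of (c) $\Longrightarrow$ (d), exploiting that under the strong mid-$p$ assumption the middle and strong norms live on the same underlying space.
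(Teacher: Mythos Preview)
Your proof is correct and follows exactly the analogous argument the paper intends (the paper itself gives no separate proof, merely writing ``Analogously, we have:'' after the preceding proposition). One small wording slip in your (a)$\Rightarrow$(b): the source $(\ell_p^{mid}(E),\|\cdot\|_{mid,p})$ is always complete by the earlier proposition, so the role of hypothesis (a) is to make the \emph{image} $(\ell_p^{mid}(E),\|\cdot\|_{w,p})$ complete---your parenthetical swaps these, but the Open Mapping argument you invoke is the right one.
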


The next examples show that the spaces $\ell_p^u(E)$ and $\ell_p^{mid}(E)$ are incomparable in general.

\begin{example}\rm On the one hand, combining Theorem \ref{coinc}(i) with \cite[Theorem 3.7]{djt} we have $\ell_2^{mid}(c_0) = \ell_2^w(c_0)$. Since $\ell_2^u(c_0)$ is a proper subspace of $\ell_2^w(c_0)$ \cite[p.\,93]{df}, it follows that $\ell_2^{mid}(c_0) \nsubseteqq \ell_2^u(c_0)$. On the other hand,
$$\ell_1^u(\ell_1) = \ell_1^w(\ell_1) \nsubseteqq \ell_1(\ell_1) = \ell_1^{mid}
(\ell_1),$$
where the first equality follows from the fact that bounded linear operators from $c_0$ to $\ell_1$ are compact combined with  \cite[Proposition 8.2(1)]{df}, and the last equality is a consequence of Theorem \ref{coinc}(ii).
\end{example}

We saw that $\ell_p^{mid}(E)$ is not contained in $\ell_p^u(E)$ in general. But sometimes this happens:
\begin{proposition} If $E$ is a strong mid $p$-space, then $\ell_p^{mid}(E) \stackrel{1}{\hookrightarrow} \ell_p^u(E)$.
\end{proposition}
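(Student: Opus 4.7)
The plan is to unwind the definitions and note that everything has already been done in earlier results. Since $E$ is a strong mid $p$-space, we have $\ell_p^{mid}(E) = \ell_p(E)$ as sets by definition, so the set-theoretic inclusion $\ell_p^{mid}(E) \subseteq \ell_p^u(E)$ reduces to the well-known fact $\ell_p(E) \subseteq \ell_p^u(E)$, which is recalled in the introduction of the paper. To verify this inclusion quickly: for $(x_j)_{j=1}^\infty \in \ell_p(E)$ and $k \in \mathbb{N}$, one has $\|(x_j)_{j=k}^\infty\|_{w,p} \leq \|(x_j)_{j=k}^\infty\|_p$, and the right-hand side tends to $0$ as $k \to \infty$ since $(\|x_j\|)_{j=1}^\infty \in \ell_p$.

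For the norm inequality, recall that the norm on $\ell_p^u(E)$ is the one inherited from $\ell_p^w(E)$, namely $\|\cdot\|_{w,p}$. The isometric inclusion $\ell_p^{mid}(E) \stackrel{1}{\hookrightarrow} \ell_p^w(E)$ established in the proposition that introduces $\|\cdot\|_{mid,p}$ gives
\[
\|(x_j)_{j=1}^\infty\|_{w,p} \leq \|(x_j)_{j=1}^\infty\|_{mid,p}
\]
for every $(x_j)_{j=1}^\infty \in \ell_p^{mid}(E)$. Combined with the inclusion above, this yields exactly $\ell_p^{mid}(E) \stackrel{1}{\hookrightarrow} \ell_p^u(E)$.

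There is no real obstacle here: the content is entirely packaged into (i) the hypothesis that $E$ is a strong mid $p$-space, (ii) the classical inclusion $\ell_p(E) \subseteq \ell_p^u(E)$, and (iii) the already-proved isometric inclusion $\ell_p^{mid}(E) \stackrel{1}{\hookrightarrow} \ell_p^w(E)$. The only thing one has to be careful about is not to invoke norm equivalence between $\|\cdot\|_p$ and $\|\cdot\|_{mid,p}$ (which holds on $\ell_p(E)$ via the Open Mapping Theorem for a strong mid $p$-space but is not needed here), since the desired estimate is the one-sided bound $\|\cdot\|_{w,p} \leq \|\cdot\|_{mid,p}$, which is available directly.
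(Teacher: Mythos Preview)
Your proof is correct and in fact slightly more economical than the paper's. Both arguments split the task into the set-theoretic inclusion $\ell_p^{mid}(E)\subseteq \ell_p^u(E)$ and the norm estimate $\|\cdot\|_{w,p}\le\|\cdot\|_{mid,p}$, and both handle the norm estimate via the already established $\ell_p^{mid}(E)\stackrel{1}{\hookrightarrow}\ell_p^w(E)$. The difference lies in the set inclusion: the paper invokes the Open Mapping Theorem to deduce that $\|\cdot\|_p$ and $\|\cdot\|_{mid,p}$ are equivalent on $\ell_p^{mid}(E)=\ell_p(E)$, so that the tails $(x_j)_{j=k}^\infty$ tend to $0$ in $\|\cdot\|_{mid,p}$ and hence in $\|\cdot\|_{w,p}$; you bypass this by using the elementary one-sided bound $\|(x_j)_{j=k}^\infty\|_{w,p}\le\|(x_j)_{j=k}^\infty\|_p$ directly. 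Your route avoids any appeal to completeness and makes transparent that the result is really just a repackaging of the classical inclusion $\ell_p(E)\subseteq\ell_p^u(E)$.
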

\begin{proof} The norms $\|\cdot\|_p$ and $\|\cdot\|_{mid,p}$ are equivalent on $\ell_p^{mid}(E) = \ell_p(E)$ by the Open Mapping Theorem. Let $x = (x_j)_{j=1}^\infty \in \ell_p^{mid}(E)$. Since $(x_j)_{j=1}^k \stackrel{k~~~}{\longrightarrow x}$ in $\ell_p(E)$, by the equivalence of the norms we have $(x_j)_{j=1}^k \stackrel{k~~~}{\longrightarrow x}$ in $\ell_p^{mid}(E)$. As $\ell_p^{mid}(E) \stackrel{1}{\hookrightarrow} \ell_p^w(E)$, we have
$$ \|(x_j)_{j=k}^\infty \|_{w,p}  = \|(x_j)_{j=1}^\infty - (x_j)_{j=1}^{k-1}\|_{w,p} \leq \|(x_j)_{j=1}^\infty - (x_j)_{j=1}^{k-1}\|_{mid,p} \stackrel{k\rightarrow \infty}{\longrightarrow}0, $$
proving that $x \in \ell_p^u(E)$.
\end{proof}

The purpose of the next section is to study the operator ideals determined by the transformation of vector-valued sequences belonging to the sequence spaces in the chain (\ref{inc}). A usual approach, proving all the desired properties using the definitions of the underlying sequence spaces, would lead to long and boring proofs. Alternatively, we shall apply the abstract framework constructed in  \cite{botelhocampos} to deal with this kind of operators. In this fashion we will end up with short and concise proofs. Instead of its definition, we shall use that the class of mid $p$-summable sequences enjoys the two properties we prove below. For the definitions of finitely determined and linearly stable sequence classes, see \cite{botelhocampos}.

\begin{proposition}\label{seqclass} The correspondence
$E \mapsto \ell_p^{mid}(E)$ is a finitely determined sequence class.
\end{proposition}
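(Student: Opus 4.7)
The plan is to verify the two defining conditions separately: that $E \mapsto \ell_p^{mid}(E)$ satisfies the axioms of a sequence class in the sense of \cite{botelhocampos}, and that it is finitely determined. The structural part is essentially already in place. The preceding proposition supplies $\ell_p(E) \stackrel{1}{\hookrightarrow} \ell_p^{mid}(E) \stackrel{1}{\hookrightarrow} \ell_p^w(E)$, so $c_{00}(E) \subseteq \ell_p^{mid}(E)$ and $\ell_p^{mid}(E) \stackrel{1}{\hookrightarrow} \ell_\infty(E)$ follow from the corresponding facts for $\ell_p(E)$ and $\ell_p^w(E)$; the normalization $\|e_j \otimes x\|_{mid,p} = \|x\|$ can be read off directly from (\ref{normmid}) by plugging in $(\varphi, 0, 0, \ldots)$ for a norming functional $\varphi \in B_{E^*}$.

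For finite determination, I would denote the $k$-th truncation by $(x_j)_{j=1}^k := (x_1, \ldots, x_k, 0, 0, \ldots)$ and aim at the equality
$$\|(x_j)_{j=1}^\infty\|_{mid,p} = \sup_k \|(x_j)_{j=1}^k\|_{mid,p},$$
with the understanding that one side is finite if and only if the other is. The direction $\geq$ is immediate, because replacing $(x_j)_{j=1}^\infty$ by a truncation only removes non-negative terms from the double sum in (\ref{normmid}).

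The substantive direction is the reverse. Assuming $M := \sup_k \|(x_j)_{j=1}^k\|_{mid,p} < \infty$, I would fix an arbitrary $(x_n^*)_{n=1}^\infty \in B_{\ell_p^w(E^*)}$ and invoke monotone convergence (all summands are non-negative) to obtain
$$\sum_{n=1}^\infty \sum_{j=1}^\infty |x_n^*(x_j)|^p = \lim_k \sum_{n=1}^\infty \sum_{j=1}^k |x_n^*(x_j)|^p \leq M^p.$$
Specializing to $(x_n^*) = (\varphi, 0, 0, \ldots)$ with $\varphi \in B_{E^*}$ shows $(x_j) \in \ell_p^w(E)$; homogeneity then extends the general bound to arbitrary $(x_n^*) \in \ell_p^w(E^*)$, yielding the double-series condition and hence $(x_j) \in \ell_p^{mid}(E)$. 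Taking the supremum over $(x_n^*) \in B_{\ell_p^w(E^*)}$ finally gives $\|(x_j)\|_{mid,p} \leq M$.

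I do not anticipate any serious obstacle; the whole argument is essentially a Tonelli-style exchange of limit and supremum. The only mild bookkeeping is that membership in $\ell_p^{mid}(E)$ carries two requirements (weak $p$-summability and summability of the double series for every $(x_n^*) \in \ell_p^w(E^*)$), but the uniform truncation bound handles both at once.
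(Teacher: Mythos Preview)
Your proposal is correct and follows essentially the same route as the paper's proof: both verify the sequence-class axioms via the chain $\ell_p(E) \stackrel{1}{\hookrightarrow} \ell_p^{mid}(E) \stackrel{1}{\hookrightarrow} \ell_p^w(E)$, and both establish finite determination by the monotone-convergence interchange $\sup_{(x_n^*)} \sum_{n,j} = \sup_k \sup_{(x_n^*)} \sum_n \sum_{j\leq k}$. The paper simply states this displayed equality and reads off the conclusion, while you spell out the two directions and the bookkeeping about weak $p$-summability more explicitly; the content is the same.
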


\begin{proof}
It is plain that $c_{00}(E) \subseteq \ell_p^{mid}(E)$ and
$\|e_j\|_{mid,p} = 1$, where $e_j$ is the $j$-th canonical unit
scalar-valued sequence. Since $\ell_p^{mid}(E)
\stackrel{1}{\hookrightarrow} \ell_p^w(E)$ and $\ell_p^{w}(E)
\stackrel{1}{\hookrightarrow} \ell_\infty(E)$, we have
$\ell_p^{mid}(E) \stackrel{1}{\hookrightarrow} \ell_\infty(E)$. Let
$(x_j)_{j=1}^\infty$ be a $E$-valued sequence. The equality
$$\sup_{(x_n^*)_{n=1}^\infty \in  B_{\ell_p^w(E^{*})}}\left(\sum_{n=1}^\infty \sum_{j=1}^\infty |x_n^*(x_j)|^p\right)^{1/p} = \sup_k \sup_{(x_n^*)_{n=1}^\infty \in  B_{\ell_p^w(E^{*})}}\left(\sum_{n=1}^\infty \sum_{j=1}^k |x_n^*(x_j)|^p\right)^{1/p} $$
shows that $(x_j)_{j=1}^\infty \in \ell_p^{mid}(E)$ if and only if $\sup\limits_k \|(x_j)_{j=1}^k\|_{mid,p} < \infty$ and that
$\|(x_j)_{j=1}^\infty\|_{mid,p} = \sup\limits_k \|(x_j)_{j=1}^k\|_{mid,p}$.
\end{proof}

\begin{proposition}\label{linestab}
The correspondence
$E \mapsto \ell_p^{mid}(E)$ is linearly stable.
\end{proposition}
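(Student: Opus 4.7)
The plan is to verify the two requirements of linear stability from the Botelho--Campos framework \cite{botelhocampos}: for every $u \in {\cal L}(E;F)$, the induced map
\[
\widehat{u} \colon \ell_p^{mid}(E) \longrightarrow \ell_p^{mid}(F),\quad \widehat{u}\bigl((x_j)_{j=1}^\infty\bigr) := (u(x_j))_{j=1}^\infty,
\]
is a well-defined bounded linear operator with $\|\widehat{u}\| = \|u\|$. Linearity of $\widehat{u}$ will be immediate; the content lies in the two norm estimates.

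For the upper bound $\|\widehat{u}\| \le \|u\|$, the key move is to transfer testing sequences through the adjoint. The standard computation
\[
\|(u^*(y_n^*))_{n=1}^\infty\|_{w,p} = \sup_{x\in B_E}\Bigl(\sum_{n=1}^\infty |y_n^*(u(x))|^p\Bigr)^{1/p} \le \|u\| \cdot \|(y_n^*)_{n=1}^\infty\|_{w,p}
\]
shows that $u^*$ sends $\ell_p^w(F^{*})$ into $\ell_p^w(E^{*})$ with norm $\le \|u\|$. Assuming $u\ne 0$ (the case $u=0$ being trivial), for any $(y_n^*)_{n=1}^\infty \in B_{\ell_p^w(F^{*})}$ the sequence $\bigl(u^*(y_n^*)/\|u\|\bigr)_{n=1}^\infty$ lies in $B_{\ell_p^w(E^{*})}$. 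Given $(x_j)_{j=1}^\infty \in \ell_p^{mid}(E)$, feeding this into the definition of $\|\cdot\|_{mid,p}$ yields
\[
\sum_{n=1}^\infty \sum_{j=1}^\infty |y_n^*(u(x_j))|^p = \sum_{n=1}^\infty \sum_{j=1}^\infty |u^*(y_n^*)(x_j)|^p \le \|u\|^p \cdot \|(x_j)_{j=1}^\infty\|_{mid,p}^p.
\]
Taking the supremum over $(y_n^*)_n \in B_{\ell_p^w(F^{*})}$ gives both $\widehat{u}\bigl((x_j)_{j=1}^\infty\bigr) \in \ell_p^{mid}(F)$ and the bound $\|\widehat{u}\bigl((x_j)_{j=1}^\infty\bigr)\|_{mid,p} \le \|u\|\cdot\|(x_j)_{j=1}^\infty\|_{mid,p}$.

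For the reverse inequality $\|\widehat{u}\| \ge \|u\|$, I would test $\widehat{u}$ on single-entry sequences. For $x \in E$, the identity
\[
\|(x,0,0,\ldots)\|_{mid,p} = \sup_{(x_n^*)_n \in B_{\ell_p^w(E^{*})}} \Bigl(\sum_n |x_n^*(x)|^p\Bigr)^{1/p} = \|x\|
\]
holds: the supremum is bounded above by $\|x\|$ because the sum is $\le \|x\|^p\cdot\|(x_n^*)_n\|_{w,p}^p \le \|x\|^p$, and is attained in the limit by the rank-one test sequences $(\varphi,0,0,\ldots)$ with $\varphi \in B_{E^{*}}$. Therefore $\|\widehat{u}(x,0,0,\ldots)\|_{mid,p} = \|u(x)\|$ for every $x \in E$, yielding $\|\widehat{u}\| \ge \|u\|$.

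I do not expect any serious obstacle: all the analytic content is carried by the standard bound on $u^*$ between weak sequence spaces, and the only care needed is the bookkeeping of the normalization factor $\|u\|$ and the straightforward verification that the one-term sequence $(x,0,0,\ldots)$ has $\|\cdot\|_{mid,p}$-norm equal to $\|x\|$.
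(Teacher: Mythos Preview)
Your proof is correct and follows essentially the same approach as the paper: both arguments transfer testing sequences through the adjoint $u^*$ (the paper phrases this as ``linear stability of $\ell_p^w(\cdot)$'') to show $(u(x_j))_j \in \ell_p^{mid}(F)$ with the norm bound $\|\widehat{u}\|\le\|u\|$. The paper then dismisses the equality $\|\widehat{u}\|=\|u\|$ as ``a standard calculation'', whereas you actually supply it via the one-term sequences $(x,0,0,\ldots)$; so your write-up is, if anything, more complete.
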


\begin{proof}
Let $T \in {\cal L}(E;F)$. By the linear stability of $\ell_p^{w}(\cdot)$  \cite[Theorem 3.3]{botelhocampos}, $(T^*(y_n^*))_{n=1}^\infty = (y_n^* \circ T)_{n=1}^\infty\in \ell_p^w(E^{*})$ for every $(y_n^*)_{n=1}^\infty \in \ell_p^w(F^{*})$, where $T^{*}\colon F^{*} \longrightarrow E^{*}$ is the adjoint of $T$. Therefore, 
\[\left(\left(y_n^*(T\left(x_j\right))\right)_{j=1}^\infty\right)_{n=1}^\infty = \left(\left(y_n^* \circ T\left(x_j\right)\right)_{j=1}^\infty\right)_{n=1}^\infty \in \ell_p\left(\ell_p\right),\]
hence $(T(x_j))_{j=1}^\infty \in \ell_p^{mid}(F)$ for every $(x_j)_{j=1}^\infty \in \ell_p^{mid}(E)$. Defining $\widehat{T}\colon \ell_p^{mid}(E) \longrightarrow \ell_p^{mid}(F)$ by $\widehat{T}((x_j)_{j=1}^\infty) = (T(x_j))_{j=1}^\infty$, a standard calculation shows that $\|T\|= \|\widehat{T}\|$.
\end{proof}

\section{Mid summing operators}
\label{sec2}

Following the classical line of studying operators that improve the
summability of sequences, in this section we investigate the obvious
classes of operators, involving mid $p$-summable sequences,
determined by the chain
$$\ell_p \langle E \rangle  \subseteq \ell_p(E) \subseteq \ell_p^{mid}(E) \subseteq \ell_p^{w}(E). $$

From now on in this section, $1 \leq q \leq p <\infty$ are real numbers and $T \in \mathcal{L}(E;F)$ is a continuous linear operator.

\begin{definition}\rm \label{defop}
The operator $T$ is said to be:\\
{\rm (i)} \emph{Absolutely mid $(p;q)$-summing} if
\begin{equation}\label{aps}
\left(T\left(x_j\right)\right)_{j=1}^\infty \in \ell_p(F)\ \
\mathrm{whenever}\ \ (x_j)_{j=1}^\infty \in \ell_{q}^{mid}(E).
\end{equation}
{\rm (ii)} \emph{Weakly mid $(p;q)$-summing} if
\begin{equation}\label{wps}
\left(T\left(x_j\right)\right)_{j=1}^\infty \in \ell_p^{mid}(F)\ \
\mathrm{whenever}\ \ (x_j)_{j=1}^\infty \in \ell_{q}^{w}(E).
\end{equation}
{\rm (iii)} \emph{Cohen mid $p$-summing} if
\begin{equation}\label{cps}
\left(T\left(x_j\right)\right)_{j=1}^\infty \in \ell_p\langle F
\rangle \ \ \mathrm{whenever}\ \ (x_j)_{j=1}^\infty \in
\ell_{p}^{mid}(E).
\end{equation}
\end{definition}

The spaces formed by the operators above shall be denoted by $\Pi_{p;q}^{mid}(E;F), W_{p;q}^{mid}(E;F)$ and ${\cal D}_{p}^{mid}(E;F)$, respectively. When $p =  q$ we simply write mid $p$-summing instead mid $(p;p)$-summing and use the symbols  $\Pi_{p}^{mid}$ and $W_{p}^{mid}$. A standard calculation shows that if  $p < q$ then $\Pi_{p;q}^{mid}(E;F) =  W_{p;q}^{mid}(E;F) = \{0\}$. From the definitions it is clear that
$$\Pi_{p;q}= W_{p;q}^{mid} \cap \Pi_{p;q}^{mid} {\rm ~~and~~}{\cal D}_{p}^{mid}= {\cal D}_p \cap \Pi_{p}^{mid}.$$

Having in mind the properties of $\ell_p^{mid}(E)$ proved in the
previous section, the following three results are straightforward
consequences of \cite[Proposition 1.4]{botelhocampos} (with the
exception of the equivalences in Theorem \ref{teoweak} involving
$\ell_p^u(E)$, which follow from \cite[Corollary
1.6]{botelhocampos}). Recall that any map $T \colon E
\longrightarrow F$ induces a map $\widetilde{T}$ between $E$-valued
sequences and $F$-valued sequences given by
$\widetilde{T}\left((x_j)_{j=1}^\infty \right) =
(T(x_j))_{j=1}^\infty$.

\begin{theorem} The following are equivalent: \\
{\rm (i)} $T\in \Pi_{p;q}^{mid}(E;F)$.\\
{\rm (ii)} The induced map $\widetilde{T}\colon \ell_q^{mid}(E) \longrightarrow \ell_p(F)$ is a well defined continuous linear operator.\\
{\rm (iii)} There is a constant $A> 0$ such that $\left\|\left(T\left(x_j\right)\right)_{j=1}^k\right\|_p \leq A \left\|(x_j)_{j=1}^k\right\|_{mid,q}$ for every $k \in \mathbb{N}$ and all $x_j \in E$, $j=1,\ldots,k$.\\
{\rm (iv)} There is a constant $A > 0$ such that  $\left\|\left(T\left(x_j\right)\right)_{j=1}^\infty\right\|_p \leq A  \left\|(x_j)_{j=1}^\infty\right\|_{mid,q}$  for every $\left(x_j\right)_{j=1}^\infty \in \ell_{q}^{mid}(E)$.

Moreover,
$$\|T\|_{\Pi_{p;q}^{mid}}:= \|\widetilde{T}\| = \inf\{A: {\rm (iii)~holds}\} = \inf\{A: {\rm (iv)~holds}\}.$$
\end{theorem}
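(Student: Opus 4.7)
The plan is to deduce all four equivalences from the abstract framework of \cite{botelhocampos}: since Propositions \ref{seqclass} and \ref{linestab} show that $E \mapsto \ell_p^{mid}(E)$ is a finitely determined and linearly stable sequence class, and $E \mapsto \ell_p(E)$ is well known to have the same properties, \cite[Proposition 1.4]{botelhocampos} applies verbatim. Nevertheless, I would outline the underlying argument so the reader can see the mechanics.

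First I would settle (i) $\Longleftrightarrow$ (ii). The implication (ii) $\Longrightarrow$ (i) is immediate from the well-definedness of $\widetilde{T}$ on $\ell_q^{mid}(E)$. For (i) $\Longrightarrow$ (ii), linearity of $\widetilde{T}$ is obvious, so the content is continuity, which I would obtain from the Closed Graph Theorem. The source $\ell_q^{mid}(E)$ is complete by the preceding proposition and the target $\ell_p(F)$ is a standard Banach space. If $x^{(n)} \to x$ in $\ell_q^{mid}(E)$ and $\widetilde{T}(x^{(n)}) \to y$ in $\ell_p(F)$, the embeddings $\ell_q^{mid}(E) \stackrel{1}{\hookrightarrow} \ell_\infty(E)$ (from Proposition \ref{seqclass}) and $\ell_p(F) \stackrel{1}{\hookrightarrow} \ell_\infty(F)$ force coordinatewise convergence on each side, so $T(x_j^{(n)}) \to T(x_j)$ and $T(x_j^{(n)}) \to y_j$ for every $j$, hence $y = \widetilde{T}(x)$. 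Thus the graph is closed.

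Next, (ii) $\Longleftrightarrow$ (iv) is just the definition of the operator norm $\|\widetilde{T}\|$, which also shows that the infimum in (iv) equals $\|\widetilde{T}\|$. Finally, (iii) $\Longleftrightarrow$ (iv) rests on the finitely determined property proved in Proposition \ref{seqclass}, which yields $\|(x_j)_{j=1}^\infty\|_{mid,q} = \sup_k \|(x_j)_{j=1}^k\|_{mid,q}$ for every sequence in $\ell_q^{mid}(E)$, and the analogous identity holds trivially for the $\ell_p$-norm on the $F$-side. So if (iii) holds with constant $A$, taking the supremum over $k$ yields (iv) with the same $A$; conversely, for (iv) $\Longrightarrow$ (iii) one pads a finite sequence with zeros (which leaves both relevant norms unchanged) and applies (iv). This identifies the two infima with each other and with $\|\widetilde{T}\|$.

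The only delicate point is the Closed Graph step, where one must know in advance that $\ell_q^{mid}(E)$ is complete (guaranteed by the preceding proposition) and that norm convergence in both sequence spaces implies coordinatewise convergence (guaranteed by the $\ell_\infty$-embeddings). Once those prerequisites are recorded, the rest of the proof is formal bookkeeping and the norm identity $\|T\|_{\Pi_{p;q}^{mid}} = \|\widetilde{T}\|$ becomes a definition rather than a theorem.
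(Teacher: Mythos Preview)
Your proposal is correct and follows exactly the paper's approach: invoke \cite[Proposition~1.4]{botelhocampos} after recording that $E \mapsto \ell_q^{mid}(E)$ is a finitely determined and linearly stable sequence class (Propositions~\ref{seqclass} and~\ref{linestab}). The paper states only this and gives no further detail, whereas you additionally unpack the Closed Graph argument and the finitely-determined passage between (iii) and (iv); that expansion is accurate and adds nothing beyond what \cite{botelhocampos} already encodes.
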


\begin{theorem}\label{teoweak} The following are equivalent: \\
{\rm (i)} $T\in W_{p;q}^{mid}(E;F)$.\\
{\rm (ii)} The induced map $\widetilde{T}\colon \ell_q^{w}(E) \longrightarrow \ell_p^{mid}(F)$ is a well defined continuous linear operator.

\vspace*{0.4em}

\noindent{\rm (iii)} $\left(T\left(x_j\right)\right)_{j=1}^\infty \in \ell_p^{mid}(F)$ whenever $(x_j)_{j=1}^\infty \in \ell_{q}^{u}(E)$.\\
{\rm (iv)} The induced map $\widehat{T}\colon \ell_q^{u}(E) \longrightarrow \ell_p^{mid}(F)$ is a well defined continuous linear operator.
{\rm (v)} There is a constant $B >0$ such that $\left\|\left(T\left(x_j\right)\right)_{j=1}^k\right\|_{mid,p} \leq B \left\|(x_j)_{j=1}^k\right\|_{w,q}$ for every $k \in \mathbb{N}$ and all $x_j \in E$, $j=1,\ldots,k$.\\
{\rm (vi)}  There is a constant $B >0$ such that $$\left(\sum\limits_{n=1}^\infty \sum\limits_{j=1}^k \left|y_n^*\left(T\left(x_j\right)\right)\right|^p\right)^{1/p}\leq B \left\|(x_j)_{j=1}^k\right\|_{w,q}\cdot\left\|(y_n^*)_{n=1}^\infty \right\|_{w,p}$$ for every $k \in \mathbb{N}$, all $x_j \in E$, $j=1,\ldots,k$, and every $(y_n^*)_{n=1}^\infty \in \ell_{p}^{w}(F^*)$.\\
  {\rm (vii)}  There is a constant $B >0$ such that  $$\left(\sum\limits_{n=1}^\infty \sum\limits_{j=1}^\infty \left|y_n^*\left(T\left(x_j\right)\right)\right|^p\right)^{1/p}\leq B \left\|(x_j)_{j=1}^\infty\right\|_{w,q}\cdot\left\|(y_n^*)_{n=1}^\infty\right\|_{w,p}$$ for all $\left(x_j\right)_{j=1}^\infty \in \ell_{q}^{w}(E)$ and $(y_n^*)_{n=1}^\infty \in \ell_{p}^{w}(F^*)$.

Moreover,
$$\|T\|_{W_{p;q}^{mid}}:= \|\widetilde{T}\| = \|\widehat{T}\| = \inf\{B: {\rm (v)~holds}\} = \inf\{B: {\rm (vi)~holds}\}= \inf\{B: {\rm (vii)~holds}\}.$$
\end{theorem}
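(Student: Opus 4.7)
The plan is to reduce essentially everything to the abstract framework of \cite{botelhocampos}. Both sequence classes appearing in this theorem, $\ell_q^w(\cdot)$ and $\ell_p^{mid}(\cdot)$, are finitely determined and linearly stable: the former is standard (and is recorded in \cite{botelhocampos}), while the latter was established in Propositions \ref{seqclass} and \ref{linestab}. With these properties in hand, \cite[Proposition 1.4]{botelhocampos} applied to the pair $(\ell_q^w,\ell_p^{mid})$ immediately delivers the equivalences (i) $\Longleftrightarrow$ (ii) $\Longleftrightarrow$ (v), together with the norm equalities $\|T\|_{W_{p;q}^{mid}} = \|\widetilde{T}\| = \inf\{B: \text{(v) holds}\}$.

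To incorporate conditions (iii) and (iv), which involve the subspace $\ell_q^u(E)$, I would invoke \cite[Corollary 1.6]{botelhocampos}. The relevant point is that $c_{00}(E)$ is dense in $\ell_q^u(E)$ with respect to $\|\cdot\|_{w,q}$, and since $\ell_p^{mid}(\cdot)$ is finitely determined, a uniform bound on $\|(T(x_j))_{j=1}^k\|_{mid,p}$ for finite sequences automatically propagates to $(T(x_j))_{j=1}^\infty \in \ell_p^{mid}(F)$ for every $(x_j)_{j=1}^\infty \in \ell_q^w(E)$, with the same constant. This is precisely the content of Corollary 1.6 and yields (i) $\Longleftrightarrow$ (iii) $\Longleftrightarrow$ (iv) together with $\|\widetilde{T}\| = \|\widehat{T}\|$.

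The remaining items (vi) and (vii) are just reformulations of (v) and of the infinite version of (iv), obtained by unfolding the definition (\ref{normmid}) of $\|\cdot\|_{mid,p}$. From
\[\left\|(T(x_j))_{j=1}^k\right\|_{mid,p} = \sup_{(y_n^*)_{n=1}^\infty \in B_{\ell_p^w(F^*)}} \left(\sum_{n=1}^\infty \sum_{j=1}^k |y_n^*(T(x_j))|^p\right)^{1/p}\]
and the homogeneity in $(y_n^*)$, one sees that (v) is equivalent to (vi) with the same infimum of constants; the passage from (vi) to (vii) is the finite-to-infinite supremum identity already exploited in Proposition \ref{seqclass}.

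The only delicate point I anticipate is bookkeeping: keeping track of how the double supremum defining $\|\cdot\|_{mid,p}$ interacts with the supremum over the unit ball of $\ell_q^w(E)$, so that all four infima of constants agree with $\|T\|_{W_{p;q}^{mid}}$. I do not expect a genuine obstacle here; the real content of the theorem is that this new class of operators slots cleanly into the pre-existing machinery of \cite{botelhocampos}, and everything is a matter of applying the right abstract result.
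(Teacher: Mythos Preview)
Your proposal is correct and follows essentially the same approach as the paper: the authors also obtain (i)--(ii), (v)--(vii) from \cite[Proposition~1.4]{botelhocampos} applied to the pair $(\ell_q^w,\ell_p^{mid})$ (using Propositions~\ref{seqclass} and~\ref{linestab}), and the equivalences involving $\ell_q^u(E)$ from \cite[Corollary~1.6]{botelhocampos}. Your additional remark that (vi) and (vii) are simply (v) and its infinite version with the $\|\cdot\|_{mid,p}$-norm unfolded is exactly the bookkeeping implicit in the paper's one-line proof.
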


\begin{theorem} The following are equivalent: \\
{\rm (i)} $T\in {\cal D}_{p}^{mid}(E;F)$.\\
{\rm (ii)} The induced map $\widetilde{T}\colon \ell_p^{mid}(E) \longrightarrow \ell_p\langle F\rangle$ is a well defined continuous linear operator.
{\rm (iii)} There is a constant $C>0$ such that  $\left\|\left(T\left(x_j\right)\right)_{j=1}^k\right\|_{C,p} \leq C \left\|(x_j)_{j=1}^k\right\|_{mid,p}$ for every $k \in \mathbb{N}$ and all $x_j \in E$, $j=1,\ldots,k$.\\
 {\rm (iv)} There is a constant $C>0$ such that   $$\sum_{j=1}^k \left|y_j^*\left(T\left(x_j\right)\right)\right|\leq C\left\|(x_j)_{j=1}^k\right\|_{mid,p}\cdot\left\|(y_j^*)_{j=1}^k\right\|_{w,p^*}$$
 for every $k \in \mathbb{N}$, all $x_j\in E$ and $y_j^* \in F^*$, $j=1,\ldots,k$.\\
{\rm (v)} There is a constant $C>0$ such that $$\sum_{j=1}^\infty \left|y_j^*\left(T\left(x_j\right)\right)\right|\leq C \left\|(x_j)_{j=1}^\infty\right\|_{mid,p}\cdot\left\|(y_j^*)_{j=1}^\infty\right\|_{w,p^*}$$ for all $\left(x_j\right)_{j=1}^\infty \in \ell_{p}^{mid}(E)$ and $(y_j^*)_{j=1}^\infty \in \ell_{p^*}^{w}(F^*)$.

Moreover,
$$\|T\|_{{\cal D}_p^{mid}}:= \|\widetilde{T}\| = \inf\{C: {\rm (iii)~holds}\} = \inf\{C: {\rm (iv)~holds}\}= \inf\{C: {\rm (v)~holds}\}.$$
\end{theorem}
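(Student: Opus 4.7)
The plan is to derive (i) $\Longleftrightarrow$ (ii) $\Longleftrightarrow$ (iii) from the abstract machinery of \cite{botelhocampos}, exactly as in the two preceding theorems, and then to handle the remaining equivalences with (iv) and (v) by combining a dual characterization of the Cohen norm with a finite-truncation argument. Concretely, Propositions \ref{seqclass} and \ref{linestab} show that $E \mapsto \ell_p^{mid}(E)$ is a finitely determined, linearly stable sequence class, and it is standard that $E \mapsto \ell_p\langle E\rangle$ is also finitely determined and linearly stable. Feeding these two sequence classes into \cite[Proposition 1.4]{botelhocampos} immediately yields (i) $\Longleftrightarrow$ (ii) $\Longleftrightarrow$ (iii) together with the equalities $\|T\|_{{\cal D}_p^{mid}} = \|\widetilde{T}\| = \inf\{C : \text{(iii) holds}\}$.

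For (iii) $\Longleftrightarrow$ (iv) I would simply unfold the Cohen norm as
\[\left\|(T(x_j))_{j=1}^k\right\|_{C,p} = \sup_{(y_j^*)_{j=1}^k \in B_{\ell_{p^*}^w(F^*)}} \sum_{j=1}^k |y_j^*(T(x_j))|,\]
noting that restricting to finite $(y_j^*)$ is legal because the $\ell_{p^*}^w$-norm is preserved under zero-extension. Taking the supremum over all such $(y_j^*)$ in the inequality of (iv) recovers the inequality in (iii), and vice versa, so (iii) and (iv) hold for the same constants $C$.

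Next, (v) $\Longrightarrow$ (iv) is immediate by zero-extending the finite sequences $(x_j)_{j=1}^k$ and $(y_j^*)_{j=1}^k$ to infinite ones, since the relevant norms are left unchanged. For the reverse direction, fix $(x_j)_{j=1}^\infty \in \ell_p^{mid}(E)$ and $(y_j^*)_{j=1}^\infty \in \ell_{p^*}^w(F^*)$. The fact that both $\ell_p^{mid}(\cdot)$ and $\ell_{p^*}^w(\cdot)$ are finitely determined gives $\|(x_j)_{j=1}^k\|_{mid,p} \leq \|(x_j)_{j=1}^\infty\|_{mid,p}$ and $\|(y_j^*)_{j=1}^k\|_{w,p^*} \leq \|(y_j^*)_{j=1}^\infty\|_{w,p^*}$ for every $k$. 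Applying (iv), the partial sums $\sum_{j=1}^k |y_j^*(T(x_j))|$ are uniformly bounded by $C\|(x_j)_{j=1}^\infty\|_{mid,p}\|(y_j^*)_{j=1}^\infty\|_{w,p^*}$; being non-decreasing, they converge and the limit obeys the same bound, so (v) holds with the same infimum constant.

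There is no serious obstacle here: the only mildly technical point is the truncation step in (iv) $\Longrightarrow$ (v), where the finitely-determined property is essential to pass from partial to infinite norms. Everything else is a transparent transcription of the proofs of the two preceding theorems.
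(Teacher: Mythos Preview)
Your proposal is correct and follows essentially the same approach as the paper: both invoke \cite[Proposition~1.4]{botelhocampos} via Propositions~\ref{seqclass} and~\ref{linestab}. The only difference is cosmetic---the paper delegates all five equivalences (including (iv) and (v)) to that cited result in a single sentence, whereas you spell out the (iii)$\Leftrightarrow$(iv)$\Leftrightarrow$(v) steps by hand via the dual formula for $\|\cdot\|_{C,p}$ and the finitely-determined property; this is exactly the content that \cite[Proposition~1.4]{botelhocampos} packages abstractly.
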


\begin{theorem}\label{teoideal}The classes $\left(\Pi_{p;q}^{mid}, \|\cdot\|_{\Pi_{p;q}^{mid}}\right)$, $\left(W_{p;q}^{mid}, \|\cdot\|_{W_{p;q}^{mid}}\right)$  and  $\left({\cal D}_{p}^{mid}, \|\cdot\|_{{\cal D}_p^{mid}} \right)$ are Banach operator ideals.
\end{theorem}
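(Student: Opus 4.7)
The plan is to invoke the abstract machinery from \cite{botelhocampos} which the authors have already announced is the correct vehicle here. Once a pair of sequence classes is verified to be finitely determined and linearly stable, the class of operators whose induced map between the corresponding sequence spaces is bounded is automatically a Banach operator ideal, with the natural norm $T\mapsto\|\widetilde{T}\|$.

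First I would gather the relevant sequence classes. The classical correspondences $E\mapsto \ell_p(E)$, $E\mapsto \ell_p^{w}(E)$ and $E\mapsto \ell_p\langle E\rangle$ are all known to be finitely determined and linearly stable (this is part of the collection of examples worked out in \cite{botelhocampos}). Combining this with Propositions \ref{seqclass} and \ref{linestab}, which establish the two properties for $E\mapsto\ell_p^{mid}(E)$, I obtain that each of the three pairs
\[
\bigl(\ell_q^{mid}(\cdot),\,\ell_p(\cdot)\bigr),\qquad
\bigl(\ell_q^{w}(\cdot),\,\ell_p^{mid}(\cdot)\bigr),\qquad
\bigl(\ell_p^{mid}(\cdot),\,\ell_p\langle\,\cdot\,\rangle\bigr)
\]
consists of finitely determined, linearly stable sequence classes.

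Next I would apply \cite[Proposition 1.4]{botelhocampos} (the result the authors explicitly invoke in the paragraph preceding the three characterization theorems) to each pair. That result asserts that the operators $T$ for which the induced map $\widetilde{T}$ takes sequences from the first class to sequences of the second class form a Banach operator ideal under the norm $T\mapsto\|\widetilde{T}\|$. By Definition \ref{defop} and the three characterization theorems just proved, these three ideals are exactly $\Pi_{p;q}^{mid}$, $W_{p;q}^{mid}$ and ${\cal D}_p^{mid}$, and the expressions $\|\cdot\|_{\Pi_{p;q}^{mid}}$, $\|\cdot\|_{W_{p;q}^{mid}}$, $\|\cdot\|_{{\cal D}_p^{mid}}$ coincide with the norms $\|\widetilde{T}\|$ supplied by the framework. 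The three statements follow at once.

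I expect no genuine obstacle. All the substantive work — endowing $\ell_p^{mid}(E)$ with a complete norm, verifying that it is a finitely determined linearly stable sequence class, and producing the characterizations of each class of operators in the form required by \cite{botelhocampos} — has already been carried out in Section \ref{sec1} and in the three characterization theorems above. The mildest point of attention is simply matching the hypotheses of \cite[Proposition 1.4]{botelhocampos} against each of the three pairs, but this is immediate once the two structural properties of $\ell_p^{mid}(\cdot)$ are in hand. Thus the proof of Theorem \ref{teoideal} reduces to a clean application of the cited abstract result, in line with the authors' stated strategy of avoiding long \emph{ad hoc} verifications of the operator ideal axioms.
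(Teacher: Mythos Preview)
Your overall strategy is exactly the paper's: invoke the abstract framework of \cite{botelhocampos} after having established that $E\mapsto\ell_p^{mid}(E)$ is a finitely determined, linearly stable sequence class. Two small points separate your write-up from the authors' actual proof.

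First, you cite the wrong result. \cite[Proposition~1.4]{botelhocampos} is the statement the paper uses to obtain the three \emph{characterization} theorems (the equivalences among conditions (i)--(vii), etc.); it is not the result that yields the Banach operator ideal structure. For Theorem~\ref{teoideal} the paper appeals to \cite[Theorem~2.6]{botelhocampos}, which is the theorem in that framework asserting that $(\gamma_s(\cdot);\gamma_t(\cdot))$-summing operators form a Banach ideal.

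Second, and more substantively, that theorem carries a hypothesis you do not verify: one needs the scalar-level compatibility $\gamma_s(\mathbb{K})\stackrel{1}{\hookrightarrow}\gamma_t(\mathbb{K})$ between the source and target sequence classes. The paper checks this explicitly for the first case, writing $\ell_q^{mid}(\mathbb{K})\stackrel{1}{\hookrightarrow}\ell_p=\ell_p(\mathbb{K})$ (which uses $q\le p$), and declares the other two cases similar. Your proposal skips this entirely; while the verifications are easy, they are not vacuous --- the restriction $q\le p$ is precisely what makes them go through --- so you should include them to have a complete argument.
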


\begin{proof} We use the notation and the language of \cite{botelhocampos}.
All involved sequence classes are linearly stable. Comparing Definition \ref{defop} and \cite[Definition 2.1]{botelhocampos}, a linear operator $T$ is  mid $(p;q)$-summing if and only if is $\left(\ell_{q}^{mid}(\cdot);\ell_p(\cdot)\right)$-
summing. Since $\ell_{q}^{mid}(\mathbb{K})  \stackrel{1}{\hookrightarrow} \ell_p = \ell_p(\mathbb{K}),$ from \cite[Theorem 2.6]{botelhocampos} it follows that $\Pi_{p;q}^{mid}$ is a Banach operator ideal. The other cases are similar.
\end{proof}

The following characterizations of weak and strong mid $p$-spaces complement the ones proved in \cite[Theorems 3.7 and 4.5]{karnsinha}.

\begin{theorem}\label{revis} The following are equivalent for a Banach space $E$ and $1\leq p<\infty$:\\
{\rm (a)} $E$ is a weak mid $p$-space.\\
{\rm (b)} $\Pi_{p}^{mid}(E;F)=\Pi_{p}(E;F)$ for every Banach space $F$.\\
 {\rm (c)} $\Pi_{p}^{mid}(E;\ell_{p})=\Pi_{p}(E;\ell_{p}) = {\cal L}(E;\ell_{p})$.\\
{\rm (d)} $W_{p}^{mid}(F;E)={\cal L}(F;E)$ for every Banach space $F$.\\
{\rm (e)} $id_E\in W_{p}^{mid}(E;E)$.
\end{theorem}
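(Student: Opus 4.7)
My plan is to prove the cycle (a) $\Rightarrow$ (b) $\Rightarrow$ (c) $\Rightarrow$ (a), and the auxiliary equivalences (a) $\Leftrightarrow$ (e) $\Leftrightarrow$ (d). The two easy axes come straight from the definitions and the ideal property of $W_p^{mid}$ (Theorem \ref{teoideal}); the one step with real content is (b) $\Rightarrow$ (c), and it hinges on a single observation extracted from Section \ref{sec1}.

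For the short implications: (a) $\Leftrightarrow$ (e) is essentially a tautology, since $id_E \in W_p^{mid}(E;E)$ says exactly that every sequence in $\ell_p^{w}(E)$ lies in $\ell_p^{mid}(E)$, and the reverse inclusion is built into the definition of $\ell_p^{mid}(E)$. For (e) $\Rightarrow$ (d), I would note that any $T \in \mathcal{L}(F;E)$ factors as $T = id_E \circ T$, so membership of $id_E$ in the Banach ideal $W_p^{mid}$ (Theorem \ref{teoideal}) forces $T \in W_p^{mid}(F;E)$; (d) $\Rightarrow$ (e) is immediate by specializing $F=E$ and $T = id_E$. Finally (a) $\Rightarrow$ (b) is a direct unwinding of the definitions of $\Pi_p(E;F)$ and $\Pi_p^{mid}(E;F)$, using $\ell_p^{w}(E) = \ell_p^{mid}(E)$ in both directions; and (c) $\Rightarrow$ (a) is just a reading of Theorem \ref{coinc}(i) applied to the equality $\Pi_p(E;\ell_p)=\mathcal{L}(E;\ell_p)$ contained in (c).

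The substantive step is (b) $\Rightarrow$ (c). The key fact is that for \emph{every} Banach space $E$ one has
\[
\Pi_{p}^{mid}(E;\ell_p) = \mathcal{L}(E;\ell_p).
\]
To see this, let $T \in \mathcal{L}(E;\ell_p)$. By the linear stability of $E \mapsto \ell_p^{mid}(E)$ proved in Proposition \ref{linestab}, $T$ induces a bounded map $\widehat{T}\colon \ell_p^{mid}(E) \to \ell_p^{mid}(\ell_p)$. But $\ell_p$ is (trivially) a subspace of $L_p(\mu)$ for $\mu$ the counting measure on $\mathbb{N}$, so Theorem \ref{coinc}(ii) gives $\ell_p^{mid}(\ell_p) = \ell_p(\ell_p)$. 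Hence $\widehat{T}$ actually maps into $\ell_p(\ell_p)$, which is precisely $T \in \Pi_p^{mid}(E;\ell_p)$. Now assuming (b) and specializing to $F = \ell_p$, we obtain $\mathcal{L}(E;\ell_p) = \Pi_p^{mid}(E;\ell_p) = \Pi_p(E;\ell_p)$, which is (c).

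The main potential obstacle I see is just keeping the quantifiers and role of $\ell_p$ straight in the step (b) $\Rightarrow$ (c): it is tempting to try to use Theorem \ref{coinc}(i) directly on $E$, but the leverage comes from applying Theorem \ref{coinc}(ii) on the \emph{target} space $\ell_p$ combined with linear stability of $\ell_p^{mid}(\cdot)$, so that (b) can then collapse $\Pi_p^{mid}$ onto $\Pi_p$ on $\mathcal{L}(E;\ell_p)$. All other steps are bookkeeping with definitions and the ideal property already established.
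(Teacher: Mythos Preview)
Your proof is correct and follows essentially the same route as the paper's: both hinge on the observation that $\mathcal{L}(E;\ell_p) = \Pi_p^{mid}(E;\ell_p)$ for every $E$, combined with Theorem \ref{coinc}(i), and the easy implications are handled identically. The only cosmetic difference is that you derive this key fact from linear stability (Proposition \ref{linestab}) together with Theorem \ref{coinc}(ii), whereas the paper unwinds it by hand via the identification $\ell_p^w(E^*) \cong \mathcal{L}(E;\ell_p)$ and then reproves the relevant direction of Theorem \ref{coinc}(i) explicitly.
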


\begin{proof} The implications (a) $\Longrightarrow$ (b), (d) $\Longrightarrow$ (e) $\Longrightarrow$ (a), and
(b) $\Longrightarrow$ the first equality in (c) are obvious. Let us see that the first equality in (c) implies (a): Given $x^* =(x^*_k)_{k=1}^\infty \in \ell_{p}^{w}(E^*)$, the identification $\ell_{p}^{w}(E^*)= \mathcal{L}(E,\ell_{p})$ (see the proof of Proposition \ref{propcomp}) yields that the map
$$S_{x^*}\colon E \longrightarrow \ell_p~,~S_{x^*}(x)=(x_k^*(x))_{k=1}^\infty,$$
is a bounded linear operator. By the definition of $\ell_p^{mid}(E)$,
$$(S_{x^*}(x_n))_{n=1}^\infty = ((x^*_k(x_n))_{k=1}^\infty)_{n=1}^\infty \in \ell_{p}(\ell_{p}), $$
for every  $(x_n)_{n=1}^\infty \in \ell_{p}^{mid}(E)$. This means that $S_{x^*}\in \Pi_{p}^{mid}(E;\ell_{p})$, hence $S_{x^*}\in \Pi_{p}(E;\ell_{p})$ by assumption, for every $x^*=(x^*_k)_{k=1}^\infty \in \ell_{p}^{w}(E^*)$. So, given $(x_n)_{n=1}^\infty \in \ell_{p}^{w}(E)$, it follows that $(S_{x^*}(x_n))_{n=1}^\infty=((x^*_k(x_n))_{k=1}^\infty)_{n=1}^\infty
\in \ell_{p}(\ell_{p})$ for every $x^*=(x^*_k)_{k=1}^\infty \in
\ell_{p}^{w}(E^*)$; proving that $(x_n)_{n=1}^\infty \in
\ell_{p}^{mid}(E)$.

That (a) is equivalent to the second equality in (c) is precisely Theorem \ref{coinc}(i).

To complete the proof, let us check that (a) $\Longrightarrow$ (d): Let $T\in {\cal L}(F;E)$ and $(x_j)_{j=1}^\infty \in \ell_{p}^{w}(F)$ be given. The linear stability of $\ell_p^{w}(\cdot)$  and the assumption give  $\left(T\left(x_j\right)\right)_{j=1}^\infty \in \ell_p^w(E)=\ell_p^{mid}(E).$ This proves that $T\in W_{p}^{mid}(F;E)$.
\end{proof}

The corresponding characterizations of strong mid $p$-spaces are less interesting. We state them just for the record:

\begin{theorem}\label{revis2} The following are equivalent a Banach space $F$ and $1\leq p<\infty$:\\
{\rm (a)} $F$ is a strong mid $p$-space.\\
{\rm (b)} $\Pi_{p}^{mid}(E;F)={\cal L}(E;F)$ for every Banach space $E$.\\
{\rm (c)} $id_F\in \Pi_{p}^{mid}(F;F)$.\\
{\rm (d)} $F$ is a subspace of $L_p(\mu)$ for some Borel measure $\mu$.
\end{theorem}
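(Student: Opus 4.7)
The plan is to run the implications in the cycle $(a) \Longrightarrow (b) \Longrightarrow (c) \Longrightarrow (a)$, and then invoke Theorem \ref{coinc}(ii) to close the loop with $(a) \Longleftrightarrow (d)$. Each step is short; the arguments parallel those used in Theorem \ref{revis} but are simpler because no auxiliary space such as $\ell_p$ needs to be introduced.

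For $(a) \Longrightarrow (b)$, assume $\ell_p^{mid}(F) = \ell_p(F)$ and let $E$ be arbitrary. Given $T \in \mathcal{L}(E;F)$ and $(x_j)_{j=1}^\infty \in \ell_p^{mid}(E)$, the linear stability of the sequence class $\ell_p^{mid}(\cdot)$ proved in Proposition \ref{linestab} gives $(T(x_j))_{j=1}^\infty \in \ell_p^{mid}(F)$, and then the hypothesis forces $(T(x_j))_{j=1}^\infty \in \ell_p(F)$. So $T \in \Pi_p^{mid}(E;F)$.

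The implication $(b) \Longrightarrow (c)$ is immediate by taking $E = F$ and $T = id_F$. For $(c) \Longrightarrow (a)$, suppose $id_F \in \Pi_p^{mid}(F;F)$. For any $(x_j)_{j=1}^\infty \in \ell_p^{mid}(F)$, the defining condition (\ref{aps}) applied to $id_F$ yields $(x_j)_{j=1}^\infty = (id_F(x_j))_{j=1}^\infty \in \ell_p(F)$; hence $\ell_p^{mid}(F) \subseteq \ell_p(F)$. Combined with the inclusion $\ell_p(F) \stackrel{1}{\hookrightarrow} \ell_p^{mid}(F)$ from (\ref{inc}), this gives $\ell_p^{mid}(F) = \ell_p(F)$, i.e., $(a)$.

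Finally, the equivalence $(a) \Longleftrightarrow (d)$ is precisely Theorem \ref{coinc}(ii). There is no real obstacle here: once Proposition \ref{linestab} is in hand and the chain (\ref{inc}) is recorded, every implication is a one-line verification, which is why the authors describe this theorem as ``less interesting'' than Theorem \ref{revis}.
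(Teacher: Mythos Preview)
Your proof is correct. The paper itself omits the proof of this theorem (stating it ``just for the record''), and your cycle $(a)\Rightarrow(b)\Rightarrow(c)\Rightarrow(a)$ together with Theorem~\ref{coinc}(ii) for $(a)\Leftrightarrow(d)$ is exactly the straightforward argument the authors have in mind when they call the result ``less interesting'' than Theorem~\ref{revis}.
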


Recall that an operator ideal $\cal I$ is:\\
$\bullet$ {\it Injective} if $u \in {\cal I}(E,F)$ whenever $v \in {\cal L}(F,G)$ is a metric injection ($\|v(y)\| = \|y\|$ for every $y \in F$) such that $v \circ u \in {\cal I}(E,G)$.\\
$\bullet$ {\it Regular} if $u \in {\cal I}(E,F)$ whenever $J_F \circ u \in {\cal I}(E,F^{**})$, where $J_F \colon F \longrightarrow F^{**}$ is the canonical embedding.

\begin{proposition} The operator ideal $\Pi_{p;q}^{mid}$ is injective and the operator ideals $W_{p;q}^{mid}$ and ${\cal D}_{p}^{mid}$ are regular.
\end{proposition}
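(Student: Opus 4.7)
The plan is to reduce each of the three assertions to the corresponding ``norm-inequality'' characterization (Theorem~\ref{teoweak} for $W_{p;q}^{mid}$, together with its two analogues stated just above for $\Pi_{p;q}^{mid}$ and ${\cal D}_p^{mid}$) and then to exploit one structural fact about the ambient sequence classes: $\ell_p(\cdot)$ is preserved isometrically by metric injections, while $\ell_p^w(\cdot)$ is preserved by the canonical embedding $J_{F^*}\colon F^*\to F^{***}$ thanks to the linear stability of $\ell_p^w(\cdot)$ recorded in \cite[Theorem 3.3]{botelhocampos}.

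For the injectivity of $\Pi_{p;q}^{mid}$, let $v\in {\cal L}(F;G)$ be a metric injection and $u\in {\cal L}(E;F)$ with $v\circ u\in \Pi_{p;q}^{mid}(E;G)$. For every $(x_j)_{j=1}^\infty\in \ell_q^{mid}(E)$ the hypothesis gives $(v(u(x_j)))_{j=1}^\infty\in \ell_p(G)$, and since $\|v(u(x_j))\|_G=\|u(x_j)\|_F$, the sequence $(u(x_j))_{j=1}^\infty$ already lies in $\ell_p(F)$ with the same $\ell_p$-norm. Item (iv) of the characterization theorem for $\Pi_{p;q}^{mid}$ then yields $u\in \Pi_{p;q}^{mid}(E;F)$ and in fact $\|u\|_{\Pi_{p;q}^{mid}}=\|v\circ u\|_{\Pi_{p;q}^{mid}}$.

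For the regularity of $W_{p;q}^{mid}$ and ${\cal D}_p^{mid}$, the pivotal identity is
\[
J_{F^*}(y^*)\bigl(J_F(y)\bigr)=y^*(y),\qquad y^*\in F^*,\ y\in F,
\]
expressing the standard compatibility between the canonical embeddings of $F$ and $F^*$. Suppose $J_F\circ u\in W_{p;q}^{mid}(E;F^{**})$ and fix $(x_j)_{j=1}^\infty\in \ell_q^w(E)$ and $(y_n^*)_{n=1}^\infty\in \ell_p^w(F^*)$. The lifted sequence $(J_{F^*}(y_n^*))_{n=1}^\infty$ lies in $\ell_p^w(F^{***})$ with weak-$p$-norm at most $\|(y_n^*)_{n=1}^\infty\|_{w,p}$ by linear stability. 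Plugging it into Theorem~\ref{teoweak}(vii) applied to $J_F\circ u$, and simplifying the left-hand side via the identity above, yields
\[
\left(\sum_{n=1}^\infty \sum_{j=1}^\infty |y_n^*(u(x_j))|^p\right)^{1/p}\le \|J_F\circ u\|_{W_{p;q}^{mid}}\cdot\|(x_j)_{j=1}^\infty\|_{w,q}\cdot\|(y_n^*)_{n=1}^\infty\|_{w,p},
\]
which is precisely condition (vii) for $u$ itself; hence $u\in W_{p;q}^{mid}(E;F)$ with $\|u\|_{W_{p;q}^{mid}}\le \|J_F\circ u\|_{W_{p;q}^{mid}}$. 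The same scheme, applied to item (v) of the characterization theorem for ${\cal D}_p^{mid}$ with $(y_j^*)_{j=1}^\infty\in \ell_{p^*}^w(F^*)$ lifted to $(J_{F^*}(y_j^*))_{j=1}^\infty\in \ell_{p^*}^w(F^{***})$, delivers the regularity of ${\cal D}_p^{mid}$.

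I do not anticipate any serious obstacle; the argument is mechanical once the correct characterization is pinpointed in each case, the only point to verify being the compatibility identity $J_{F^*}(y^*)\circ J_F=y^*$. It is worth noting why injectivity is claimed only for $\Pi_{p;q}^{mid}$: mimicking the above argument for $W_{p;q}^{mid}$ or ${\cal D}_p^{mid}$ would require extending an $\ell_p^w$-family from $F^*$ to $G^*$ along a metric injection $v\colon F\to G$ while preserving the weak-$p$-norm, and such an extension is not generally available.
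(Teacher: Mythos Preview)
Your proof is correct. The injectivity argument matches the paper's (which just says ``is clear''), but your regularity argument is genuinely more streamlined than the paper's. The paper works at the level of sequences: it shows directly that $(y_j)_{j=1}^\infty \in \ell_p^{mid}(F)$ whenever $(J_F(y_j))_{j=1}^\infty \in \ell_p^{mid}(F^{**})$, and for this it needs $(J_{F^*}(y_n^*))_{n=1}^\infty \in B_{\ell_p^w(F^{***})}$ whenever $(y_n^*)_{n=1}^\infty \in B_{\ell_p^w(F^*)}$. Rather than invoking linear stability, the paper reproves this particular instance by hand via Goldstine's theorem and a weak$^*$-limit argument on finite partial sums. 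You instead apply the linear stability of $\ell_p^w(\cdot)$ (already quoted in the paper from \cite[Theorem~3.3]{botelhocampos}) to the isometry $J_{F^*}$, which gives the needed inclusion in one line, and then plug into the inequality characterizations (vii) and (v) directly. Both routes hinge on the same compatibility identity $J_{F^*}(y^*)\circ J_F = y^*$; your approach simply avoids the detour through Goldstine, at the cost of relying on the cited linear-stability result rather than being self-contained.
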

\begin{proof} The injectivity of $\Pi_{p;q}^{mid}$ is clear. To prove the regularity of $W_{p;q}^{mid}$, let $(y_j)_{j=1}^\infty \subseteq F$ be such that $(J_F(y_j))_{j=1}^\infty \in \ell_p^{mid}(F^{**})$. We have \begin{equation}\label{zequ}(y_n^{***}(J_F(y_j)))_{j,n=1}^\infty \in \ell_p(\ell_p) {\rm ~~ for~ every~} (y^{***}_n)_{n=1}^\infty \in B_{\ell_p^w(F^{***})}.
\end{equation}
In order to prove that $(y_j)_{j=1}^\infty \in \ell_p^{mid}(F)$, let $(y^*_n)_{n=1}^\infty \in  B_{\ell_p^w(F^{*})}$ be given. Then
\begin{equation}\label{fequ}\sum_{n=1}^\infty |J_F(y)(y_n^*)|^p = \sum_{n=1}^\infty |y_n^*(y)|^p \leq 1 {\rm ~~for~ every~} y \in B_F.\end{equation}
Let us see that, defining $y_n^{***} := J_{F^*}(y_n^*) \in F^{***}$ for each $n$, we have $(y^{***}_n)_{n=1}^\infty \in B_{\ell_p^w(F^{***})}$. To accomplish this task, let $y^{**} \in B_{F^{**}}$ be given. By Goldstine's Theorem, there is a net $(y_\lambda)_\lambda$ in $B_F$ such that $J_F(y_\lambda) \stackrel{w^*}{\longrightarrow} y^{**}$, that is,
\begin{equation}\label{sequ}y^*(y_\lambda) = J_F(y_\lambda)(y^*) \longrightarrow y^{**}(y^*) {\rm ~~  for~ every~} y^* \in F^*.
\end{equation}
From (\ref{fequ}) it follows that $ \sum\limits_{n=1}^\infty |y_n^*(y_\lambda)|^p \leq 1$ for every $\lambda$, in particular
\begin{equation}\label{tequ}\sum\limits_{n=1}^k |y_n^*(y_\lambda)|^p \leq 1 {\rm ~~for ~every~} k {\rm ~ and~ every~} \lambda.
 \end{equation}
 On the other hand, from (\ref{sequ}) we have $|y_n^*(y_\lambda)|^p \stackrel{\lambda}{\longrightarrow} |y^{**}(y_n^*)|^p$ for every $n$, hence $$\sum\limits_{n=1}^k |y_n^*(y_\lambda)|^p \stackrel{\lambda}{\longrightarrow} \sum\limits_{n=1}^k|y^{**}(y_n^*)|^p$$ for every $k$. So,
\begin{align*}\sum_{n=1}^\infty |y_n^{***}(y^{**})|^p &= \sum_{n=1}^\infty |J_{F^*}(y_n^*)(y^{**})= \sum_{n=1}^\infty |y^{**}(y_n^*)|^p = \sup_k \sum_{n=1}^k |y^{**}(y_n^*)|^p \\&= \sup_k \lim_\lambda \sum\limits_{n=1}^k|y_n^{*}(y_\lambda)|^p \stackrel{(\ref{tequ})}{\leq} 1,
\end{align*}
for every $y^{**} \in B_{F^{**}}$. This proves that $(y^{***}_n)_{n=1}^\infty \in B_{\ell_p^w(F^{***})}$. From (\ref{zequ}) we get
$$ (y_n^*(y_j))_{j,n=1}^\infty = (J_F(y_j)(y_n^*))_{j,n=1}^\infty= ([J_{F_{*}}(y_n^*)](J_F(y_j)))_{j,n=1}^\infty= (y_n^{***}(J_F(y_j)))_{j,n=1}^\infty \in \ell_p(\ell_p).$$
This holds for arbitrary $(y^*_n)_{n=1}^\infty \in  B_{\ell_p^w(F^{*})}$, which allows us to conclude that $(y_j)_{j=1}^\infty \in \ell_p^{mid}(F)$. Thus far we have proved that $(y_j)_{j=1}^\infty \in \ell_p^{mid}(F)$ whenever $(J_F(y_j))_{j=1}^\infty \in \ell_p^{mid}(F^{**})$. Now the regularity of $ W_{p;q}^{mid}$ follows easily.\\
\indent An adaptation of the argument above shows that $(y_j)_{j=1}^\infty \in \ell_p\langle F \rangle$ whenever $(J_F(y_j))_{j=1}^\infty \in \ell_p\langle F^{**}\rangle$. The regularity of ${\cal D}_p^{mid}$ follows.
\end{proof}
\begin{remark}\rm The final part of the proof above also proves that the ideal ${\cal D}_p$ of Cohen strongly $p$-summing operators is regular. We also know that it is surjective because it is the dual of the injective ideal $\Pi_{p^*}$ \cite{cohen73}.
\end{remark}

It is clear from the definitions that $\Pi_{p,r}^{mid} \circ W_{r,q}^{mid} \subseteq \Pi_{p,q}$ for $q \leq r \leq p$. Next we show that the equality holds if $p = q$, what gives a new factorization theorem for absolutely $p$-summing operators:

\begin{theorem}\label{thefact}
Every absolutely $p$-summing linear operator factors through
absolutely and weakly mid $p$-summing linear operators, that is,
$\Pi_p = \Pi_p^{mid} \circ W_{p}^{mid} $.
\end{theorem}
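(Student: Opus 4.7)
The plan is to prove the two inclusions separately. The inclusion $\Pi_p^{mid} \circ W_p^{mid} \subseteq \Pi_p$ is immediate from the definitions: if $T = A \circ B$ with $B \in W_p^{mid}(E;G)$ and $A \in \Pi_p^{mid}(G;F)$, then for every $(x_j)_{j=1}^\infty \in \ell_p^w(E)$ we have $(B(x_j))_{j=1}^\infty \in \ell_p^{mid}(G)$ and therefore $(T(x_j))_{j=1}^\infty = (A(B(x_j)))_{j=1}^\infty \in \ell_p(F)$, so $T \in \Pi_p(E;F)$.

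For the non-trivial inclusion $\Pi_p \subseteq \Pi_p^{mid} \circ W_p^{mid}$, the key idea is to exploit the classical Pietsch factorization together with Theorem \ref{coinc}(ii). Given $T \in \Pi_p(E;F)$, Pietsch's theorem provides a regular Borel probability measure $\mu$ on $(B_{E^*}, w^*)$ such that $T$ factors as
\[
E \xrightarrow{\,i_E\,} C(B_{E^*}) \xrightarrow{\,j_p\,} L_p(\mu) \supseteq G \xrightarrow{\,\widetilde{T}\,} F,
\]
where $i_E$ is the canonical isometric embedding, $j_p$ is the formal inclusion, $G$ is the closure of $(j_p \circ i_E)(E)$ inside $L_p(\mu)$, and $\widetilde{T}\colon G \to F$ is a bounded linear operator.

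Now set $B := j_p \circ i_E \colon E \to G$ and $A := \widetilde{T}\colon G \to F$, so that $T = A \circ B$. Because $G$ is a (closed) subspace of $L_p(\mu)$, Theorem \ref{coinc}(ii) tells us that $G$ is a strong mid $p$-space, i.e.\ $\ell_p^{mid}(G) = \ell_p(G)$. This single fact trivializes both requirements: the operator $B$ is absolutely $p$-summing (being the ``universal'' $p$-summing map produced by Pietsch), so it sends $\ell_p^w(E)$ into $\ell_p(G) = \ell_p^{mid}(G)$, which by definition means $B \in W_p^{mid}(E;G)$; and the bounded operator $A$ sends $\ell_p^{mid}(G) = \ell_p(G)$ into $\ell_p(F)$ automatically, so $A \in \Pi_p^{mid}(G;F)$. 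This exhibits $T$ as an element of $\Pi_p^{mid} \circ W_p^{mid}$.

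There is no real obstacle here; the only subtlety is choosing the intermediate space carefully. One must factor through $G \subseteq L_p(\mu)$ rather than through $L_p(\mu)$ itself, since in general $\widetilde{T}$ need not extend to all of $L_p(\mu)$, but Theorem \ref{coinc}(ii) applies equally well to subspaces of $L_p(\mu)$, which is exactly what makes the argument go through.
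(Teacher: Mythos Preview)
Your proof is correct and follows essentially the same route as the paper's: both invoke Pietsch's factorization to write $T$ as a bounded operator out of a closed subspace of $L_p(\mu)$ composed with an absolutely $p$-summing map into that subspace, and both appeal to Theorem~\ref{coinc}(ii) to identify $\ell_p^{mid}$ with $\ell_p$ on that subspace. Your explicit remark about why one must work with the subspace $G$ rather than all of $L_p(\mu)$ is a helpful clarification that the paper leaves implicit.
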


\begin{proof} We already know that $\Pi_p^{mid} \circ W_{p}^{mid} \subseteq \Pi_p$.
Let $u \in \Pi_p(E;F)$. By Pietsch's factorization theorem (\cite[Corollary 1, page 130]{df} or \cite[Theorem 2.13]{djt}), there are a Borel-Radon measure $\mu$ on $(B_{E^*},w^*)$, a closed subspace $X$ of $L_p(\mu)$ and an operator $\widehat{u}\colon X \rightarrow F$ such that
the following diagram commutes ($i_E$ and $j_p$ are the canonical operators and $j_p^E$ is the restriction of $j_p$ to $i_E(E)$):
\begin{equation*}
\begin{gathered}
\xymatrix@C15pt@R23pt{
E \ar@/_/[d]_*{i_E} \ar[rr]^*{u} &  & F \\
i_E(E) \ar[rr]^*{j_p^E} &  & X \ar@/_/[u]_*{\widehat{u}}\\
{\cal C}(K) \ar@{}[u]|*{\bigcap} \ar[rr]^*{j_p} & & L_p(\mu) \ar@{}[u]|*{\bigcap}.
}
\end{gathered}
\end{equation*}

Let $(x_j)_{j=1}^\infty \in \ell_p^{w}(E)$. By the continuity of $i_E$ and the linear stability of $\ell_p^w(\cdot)$, we have $\left(i_E(x_j)\right)_{j=1}^\infty \in \ell_p^{w}\left(i_E(E)\right)$. Since $j_p$ is absolutely $p$-summing, it follows that $\left(j_p^E(i_E(x_j))\right)_{j=1}^\infty \in \ell_p(X) \subseteq \ell_p^{mid}(X)$, proving that
$j_p^E \circ i_E \in W_{p}^{mid}(E,X)$. Now, let $(y_j)_{j=1}^\infty \in \ell_p^{mid}(X)$. As $X$ is a closed subspace of $L_p(\mu)$, from Theorem \ref{coinc}(ii) we have $(y_j)_{j=1}^\infty \in \ell_p(X)$. Thus, as $\widehat{u}$ is bounded and $\ell_p(\cdot)$ is linearly stable,
$\left(\widehat{u}(y_j)\right)_{j=1}^\infty \in \ell_p(F)$,
proving that $\widehat{u} \in \Pi_p^{mid}(X,F)$.
\end{proof}

\begin{corollary}Let $p > 1$, $u \in {\cal L} (E,F)$ and $v \in {\cal L}(F,G)$. If $u^*$ is absolutely mid $p^*$-summing and $v^*$ is weakly mid $p^*$-summing, then $v \circ  u$ is Cohen strongly $p$-summing.
\end{corollary}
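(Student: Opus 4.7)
The plan is to reduce the claim to the factorization theorem just established (Theorem \ref{thefact}) by passing to adjoints, using the classical Cohen duality $w \in \mathcal{D}_p$ if and only if $w^{*} \in \Pi_{p^{*}}$ (alluded to in the remark preceding this corollary, where it is recorded that $\mathcal{D}_p$ is the dual of the injective ideal $\Pi_{p^{*}}$).

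First, I would write $(v \circ u)^{*} = u^{*} \circ v^{*}$, so that the adjoint decomposes as the composition of two operators whose mid-summing behavior is known by hypothesis: $v^{*} \in W_{p^{*}}^{mid}(G^{*};F^{*})$ and $u^{*} \in \Pi_{p^{*}}^{mid}(F^{*};E^{*})$. Next, the inclusion $\Pi_{p^{*}}^{mid} \circ W_{p^{*}}^{mid} \subseteq \Pi_{p^{*}}$ (which is the easy direction of Theorem \ref{thefact}, itself immediate from the chain $\ell_{p^{*}}^{w} \to \ell_{p^{*}}^{mid} \to \ell_{p^{*}}$) gives $u^{*} \circ v^{*} \in \Pi_{p^{*}}(G^{*};E^{*})$.

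Finally, Cohen's duality theorem then yields $v \circ u \in \mathcal{D}_p(E;G)$, which is the desired conclusion.

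The only conceptual point to be careful about is the direction of the Cohen duality and the order of composition of adjoints (so that $v^{*}$, not $u^{*}$, is the first factor and thus the one required to lie in the weak mid class, consistent with the hypotheses). Beyond this bookkeeping there is no real obstacle: the corollary is essentially a direct transcription, under adjoints, of the factorization $\Pi_{p^{*}} = \Pi_{p^{*}}^{mid} \circ W_{p^{*}}^{mid}$, and no further estimates are needed.
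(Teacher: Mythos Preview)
Your proposal is correct and follows essentially the same route as the paper: the paper phrases the argument in the language of dual ideals, writing $(W_{p^*}^{mid})^{\rm dual} \circ (\Pi_{p^*}^{mid})^{\rm dual} \subseteq (\Pi_{p^*}^{mid} \circ W_{p^*}^{mid})^{\rm dual} = \Pi_{p^*}^{\rm dual} = {\cal D}_p$, which unwinds to exactly your computation $(v\circ u)^* = u^*\circ v^* \in \Pi_{p^*}^{mid}\circ W_{p^*}^{mid} \subseteq \Pi_{p^*}$ followed by Cohen's duality. The only difference is notational.
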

\begin{proof}Denoting, as usual, by ${\cal I}^{\rm dual}$ the ideal of all operators $u$ such that $u^* \in {\cal I}$, we have
$$(W_{p^*}^{mid})^{\rm dual} \circ (\Pi_{p^*}^{mid})^{\rm dual} \subseteq (\Pi_{p^*}^{mid} \circ W_{p^*}^{mid})^{\rm dual} = \Pi_{p^*}^{\rm dual} = {\cal D}_p,$$
where the inclusion is clear, the first equality follows from Theorem \ref{thefact} and the second from \cite{cohen73}.
\end{proof}

We finish this section solving a question left open in the last section of \cite{karnsinha}. The authors prove the following characterization (see \cite[Theorem 4.4]{karnsinha}): an operator
$T\in\mathcal{L}(E,F)$ is weakly mid $p$-summing if and only if
$S\circ T\in\Pi_{p}(E,\ell_{p})$ for every $S \in \mathcal{L}(F,\ell_{p})$. They define
\[lt_p(T)=\sup\{\pi_{p}(S\circ T):  S\in \mathcal{L}(F,\ell_{p})\
\text{and}\ \|S\|\leq1\},\]
and prove that $\left(W_{p}^{mid}, lt_p(\cdot)\right)$ is a normed operator ideal. The question whether or not this ideal is a Banach ideal is left open there, and now we solve it in the affirmative:

\begin{proposition}\label{propcomp} $lt_p(T)=\|T\|_{W_{p;q}^{mid}}$  for every $T\in W_{p}^{mid}(E;F)$, hence $\left(W_{p}^{mid}, lt_p(\cdot)\right)$ is a Banach operator ideal.
\end{proposition}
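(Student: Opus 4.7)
The plan is to translate both quantities into a common ``double supremum'' expression and recognize it as the quantity appearing in characterization (vii) of Theorem~\ref{teoweak}.

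First, I would invoke the standard canonical isometric identification $\mathcal{L}(F,\ell_p) \cong \ell_p^w(F^{*})$, already used implicitly in the proof of Theorem~\ref{revis}: to an operator $S \in \mathcal{L}(F,\ell_p)$ one associates the sequence $(y_n^{*})_{n=1}^\infty$ with $y_n^{*} = \pi_n \circ S$, where $\pi_n$ is the $n$-th coordinate functional on $\ell_p$; conversely, given $(y_n^{*})_{n=1}^\infty \in \ell_p^w(F^{*})$, the operator $S(y) = (y_n^{*}(y))_{n=1}^\infty$ is well defined and bounded. Under this correspondence $\|S\| = \|(y_n^{*})_{n=1}^\infty\|_{w,p}$ and $\|S(y)\|_{\ell_p}^p = \sum_{n=1}^\infty |y_n^{*}(y)|^p$ for every $y \in F$.

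Second, I would compute $\pi_p(S\circ T)$ through this identification. For $T \in W_p^{mid}(E;F)$, $S \in \mathcal{L}(F,\ell_p)$ with $\|S\|\leq 1$ and the associated $(y_n^{*})_{n=1}^\infty \in B_{\ell_p^w(F^{*})}$,
\[
\pi_p(S\circ T)^p \;=\; \sup_{\|(x_j)\|_{w,p}\leq 1} \sum_{j=1}^\infty \|S(T(x_j))\|_{\ell_p}^p \;=\; \sup_{\|(x_j)\|_{w,p}\leq 1} \sum_{j=1}^\infty \sum_{n=1}^\infty |y_n^{*}(T(x_j))|^p.
\]
Taking now the supremum over $\|S\|\leq 1$, i.e.\ over $(y_n^{*})_{n=1}^\infty \in B_{\ell_p^w(F^{*})}$, and using Tonelli's theorem to swap the two non-negative sums,
\[
lt_p(T)^p \;=\; \sup_{\substack{\|(x_j)\|_{w,p}\leq 1 \\ \|(y_n^{*})\|_{w,p}\leq 1}} \sum_{n=1}^\infty \sum_{j=1}^\infty |y_n^{*}(T(x_j))|^p.
\]
Comparing this directly with characterization (vii) of Theorem~\ref{teoweak} (with $q=p$), the right-hand side is precisely $\|T\|_{W_p^{mid}}^p$. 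Hence $lt_p(T) = \|T\|_{W_p^{mid}}$.

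Finally, since $(W_p^{mid},\|\cdot\|_{W_p^{mid}})$ is a Banach operator ideal by Theorem~\ref{teoideal}, the coincidence of the two norms immediately transfers completeness, yielding that $(W_p^{mid},lt_p)$ is a Banach operator ideal. The argument is essentially bookkeeping; the only point requiring care is the double-supremum/Tonelli step, which is harmless because all summands are non-negative, and the correct use of the isometric identification $\mathcal{L}(F,\ell_p) \cong \ell_p^w(F^{*})$ to make the quantities $\pi_p(S\circ T)$ and the expression in (vii) match up.
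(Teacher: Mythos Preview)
Your proof is correct and follows essentially the same approach as the paper: both use the isometric identification $\mathcal{L}(F,\ell_p)\cong\ell_p^w(F^{*})$ to rewrite $\pi_p(S\circ T)$ as the double sum $\left(\sum_{j}\sum_{n}|y_n^{*}(T(x_j))|^p\right)^{1/p}$, then match this against the expression defining $\|T\|_{W_p^{mid}}$. The only difference is presentational: the paper proves the two inequalities $lt_p(T)\leq\|T\|_{W_p^{mid}}$ and $\|T\|_{W_p^{mid}}\leq lt_p(T)$ separately, whereas you compute both quantities as the same double supremum and invoke characterization (vii) of Theorem~\ref{teoweak} once.
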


\begin{proof}
Let $T\in W_{p}^{mid}(E;F)$ and $S\in \mathcal{L}(F,\ell_{p})$ with $\|S\|\leq 1$. Here we use that the spaces $\ell_{p}^{w}(F^*)$
and $\mathcal{L}(F,\ell_{p})$ are canonically isometrically isomorphic via the correspondence
$x^*=(x^*_k)_{k=1}^\infty \in \ell_{p}^{w}(F^*) \mapsto$
$S_{x^*}\in \mathcal{L}(F,\ell_{p})$,
$S_{x^*}(x)=(x_k^*(x))_{k=1}^\infty$
 \cite[Proposition 8.2(2)]{df}. So there exists
$(y_k^*)_{k=1}^\infty \in B_{\ell_{p}^{w}(F^*)}$ such that
$S(y)=(y_k^*(y))_{k=1}^\infty$ for every $y\in F$.
Thus
\begin{equation*}
\left(\sum_{j=1}^{\infty} \left\Vert S\circ T \left(x_j\right) \right\Vert_{p} ^{p}\right)  ^{1/p} = \left( \sum_{j=1}^{\infty}\sum_{k=1}^{\infty}
\left\vert y_{k}^{\ast}(T\left(x_j\right))\right\vert ^{p}\right) ^{1/p} \leq \|T\|_{W_{p;q}^{mid}}\cdot\left\|(x_j)_{j=1}^{\infty}\right\|_{w,p},
\end{equation*}
for every $(x_j)_{j=1}^\infty \in \ell_{p}^{w}(E)$. Therefore
$S\circ T\in \Pi_{p}(E;\ell_{p})$ and $\pi_{p}(S\circ T)\leq\|T\|_{W_{p;q}^{mid}}.$
From
\begin{equation*}
\left(\sum_{j=1}^{\infty} \sum_{n=1}^{\infty}
\left\vert y_{n}^{\ast}(T(x_{j}))\right\vert ^{p}\right) ^{1/p} =
\left(
\sum_{j=1}^{\infty}
\left\Vert S\circ T(x_{j})\right\Vert_{p} ^{p}\right)  ^{1/p}\leq\pi_{p}(S\circ T)\cdot\left\|(x_j)_{j=1}^\infty\right\|_{w,p},
\end{equation*}
we get $\|T\|_{W_{p;q}^{mid}}\leq \pi_{p}(S\circ T),$
proving that $lt_p(T)=\|T\|_{W_{p;q}^{mid}}$. The second assertion follows now from Theorem \ref{teoideal}.
\end{proof}

\section{Infinite dimensional Banach spaces formed by non-summing operators}
\label{sec3}

We say that the subset $A$ of an infinite dimensional vector space $X$ is {\it lineable} if $A \cup \{0\}$ contains an infinite dimensional subspace. If $A \cup \{0\}$ contains a closed infinite dimensional subspace than we say that $A$ is {\it spaceable} (see \cite{bams} and references therein).

Let us give a contribution to this fashionable subject. Improving a result of \cite{diogo}, in \cite{timoney} it is proved, among other things, that if $E$ is an infinite dimensional superreflexive Banach space, then, regardless of the infinite dimensional Banach space $F$, there exists an infinite dimensional Banach space formed, up to the null operator, by non-$p$-summing linear operators from $E$ to $F$. Very little is known for spaces of operators on non-superreflexive spaces. We shall give a contribution in this direction.

The next lemma is left as an exercise in \cite{df} (Exercise 9.10(b)). We give a short proof the sake of completeness.

\begin{lemma}\label{lemapietsch} An operator ideal $\cal I$ is injective if and only if the following condition holds:  if $u \in {\cal I}(E;F)$, $v \in {\cal L}(E;G)$ and there exists a constant $C >0$ (eventually depending on $E,F,G,u$ and $v$) such that  $\|v(x)\| \leq C\|u(x)\|$ for every $x \in E$, then $v \in {\cal I}(E;G)$.
\end{lemma}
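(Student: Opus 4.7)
The plan is to handle the two directions of this equivalence separately, with the easy direction essentially a tautology and the harder direction requiring an extension argument through a 1-injective superspace of $G$.

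For the direction ``injective $\Longrightarrow$ condition'', I would take $u \in {\cal I}(E;F)$ and $v \in {\cal L}(E;G)$ with $\|v(x)\| \leq C\|u(x)\|$ and aim to produce a metric injection $\iota$ such that $\iota \circ v \in {\cal I}$, whence $v \in {\cal I}$ by injectivity. The first move is to observe that the inequality forces $\ker u \subseteq \ker v$, so the formula $\phi(u(x)) := \iota(v(x))$ gives a well-defined bounded linear map $\phi \colon u(E) \to H$ of norm at most $C$, where $\iota \colon G \hookrightarrow H$ is an isometric embedding into some 1-injective Banach space $H$ (concretely, one may take $H = \ell_\infty(B_{G^*})$ and $\iota(z) = (z^*(z))_{z^* \in B_{G^*}}$, which is a metric injection). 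The key step is then to extend $\phi$ to a bounded operator $\widetilde{\phi} \colon F \to H$ with $\|\widetilde{\phi}\| \leq C$, which is possible precisely because $H$ is 1-injective (a Hahn--Banach-type extension theorem). By construction $\widetilde{\phi} \circ u = \iota \circ v$, and since $u \in {\cal I}$ and ${\cal I}$ is an operator ideal, $\widetilde{\phi} \circ u = \iota \circ v$ belongs to ${\cal I}(E;H)$; injectivity of $\cal I$ applied to the metric injection $\iota$ then yields $v \in {\cal I}(E;G)$.

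For the reverse direction ``condition $\Longrightarrow$ injective'', the argument is essentially a one-liner: given a metric injection $j \colon F \to G$ and $T \in {\cal L}(E;F)$ with $j \circ T \in {\cal I}(E;G)$, set $u := j \circ T$ and $v := T$, viewed now as going from $E$ into the appropriate target spaces. The metric injection property gives $\|v(x)\| = \|T(x)\| = \|j(T(x))\| = \|u(x)\|$ for every $x \in E$, so the hypothesis applies with $C = 1$ and delivers $T \in {\cal I}(E;F)$.

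The main obstacle is the extension step in the forward direction; everything else is bookkeeping. The cleanest way to isolate it is to quote (or briefly recall) the fact that every Banach space admits an isometric embedding into a 1-injective Banach space via the canonical evaluation map into $\ell_\infty(B_{G^*})$, which together with the Hahn--Banach theorem coordinatewise gives the required norm-preserving extension of $\phi$ through $F$. With that lemma in hand, the rest of the proof is just the diagram chase outlined above.
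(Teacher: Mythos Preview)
Your proposal is correct and follows essentially the same approach as the paper: define a bounded operator on $u(E)$ via $u(x) \mapsto J_G(v(x))$ (using $\ker u \subseteq \ker v$), extend it to all of $F$ using the injectivity of $\ell_\infty(B_{G^*})$, and then apply the ideal property and injectivity of $\cal I$ to conclude; the converse is immediate by taking $C=1$. The only cosmetic difference is that the paper first defines $w\colon u(E)\to G$ and then composes with $J_G$, whereas you define $\phi$ directly into $H$, but this is the same argument.
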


\begin{proof} Assume that $\cal I$ is injective and let $u \in {\cal I}(E;F)$, $v \in {\cal L}(E;G)$ be such that $\|v(x)\| \leq C\|u(x)\|$ for every $x \in E$. This inequality guarantees that the map
$$w \colon u(E) \subseteq F \longrightarrow G~,~w(u(x)) = v(x), $$
is a well defined continuous linear operator. Considering the canonical metric injection $J_G \colon G \longrightarrow \ell_\infty(B_{G^*})$, by the extension property of $\ell_\infty(B_{G^*})$ \cite[Proposition C.3.2]{pietschlivro} there is an extension $\widetilde{w} \in {\cal L}(F;\ell_\infty(B_{G^*}))$ of $J_G \circ w$ to the whole of $F$. From $\widetilde{w} \circ u = J_G \circ v$ we conclude that $J_G \circ v$ belongs to $\cal I$, and the injectivity of $\cal I$ gives $v \in {\cal I}(E;G)$.  The converse is obvious.
\end{proof}

Henceforth, all Banach spaces are supposed to be infinite dimensional. Recall that a sequence in a Banach space $E$ is {\it overcomplete} if the linear span of each of its subsequences is dense in $E$ (see, e.g. \cite{chalendar, fonf}). We need a weaker condition:

\begin{definition}\label{defnew}\rm A sequence in a Banach space $E$ is {\it weakly overcomplete} if the closed linear span of each of its subsequences is isomorphic to $E$.
\end{definition}

\begin{example}\label{examp}\rm The sequence $(e_j)_{j=1}^\infty$ formed by the canonical unit vectors is a weakly overcomplete unconditional basis in the spaces $c_0$ and $\ell_p$, $1 \leq p < +\infty$ \cite[Proposition 4.45]{fabian}.
\end{example}

\begin{proposition}\label{lemmalin} Let $(\cal I, \|\cdot\|_{\cal I})$ be a normed operator ideal, ${\cal J}$ be an injective operator ideal and suppose that $F$ contains an isomorphic copy of a space $X$ with a weakly overcomplete unconditional basis. If  ${\cal I}(E;X) - {\cal J}(E;X)$ is non-void, then ${\cal I}(E;F) - {\cal J}(E;F)$ is spaceable (in $({\cal I}(E;F), \|\cdot\|_{\cal I})$).
\end{proposition}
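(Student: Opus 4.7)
The plan is to build, from an operator $u \in {\cal I}(E;X) \setminus {\cal J}(E;X)$, a sequence of ``disjointly supported'' copies of $u$ inside ${\cal I}(E;F)$ whose closed linear span is the desired spaceable subspace. Let $j : X \to F$ be the isomorphic embedding guaranteed by hypothesis, and let $(e_n)_{n=1}^\infty$ be a weakly overcomplete unconditional basis of $X$. Partition $\mathbb{N} = \bigsqcup_{k=1}^\infty N_k$ into infinitely many infinite blocks, and set $X_k := \overline{\mathrm{span}}\{e_j : j \in N_k\}$. By Definition \ref{defnew} each $X_k$ is isomorphic to $X$, so fix isomorphisms $\phi_k : X \to X_k$ and define $u_k := j \circ \phi_k \circ u$, which belongs to ${\cal I}(E;F)$ by the ideal property. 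Let $M$ be the closure in $({\cal I}(E;F), \|\cdot\|_{\cal I})$ of the linear span of $\{u_k : k \geq 1\}$.

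The technical core is a coefficient-extraction argument. Since each $u_k$ takes values in the closed subspace $j(X) \subseteq F$, and convergence in $\|\cdot\|_{\cal I}$ implies convergence in operator norm (hence pointwise convergence), every $v \in M$ also takes values in $j(X)$; thus $\overline{v} := (j|_{j(X)})^{-1} \circ v$ is a well-defined operator in ${\cal L}(E;X)$. Composing with the basis projection $P_{N_k} : X \to X_k$, whose norm is controlled by the unconditional basis constant of $(e_n)$, and using that $\phi_\ell u$ takes values in $X_\ell$ for each $\ell$ (with the $X_\ell$ having disjoint basis supports), any finite approximating sum $v_n = \sum_\ell \alpha_{n,\ell} u_\ell$ satisfies $P_{N_k}\, \overline{v_n} = \alpha_{n,k}\, \phi_k u$. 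Since $\phi_k u \neq 0$, letting $n \to \infty$ forces $\alpha_{n,k}$ to converge to a unique scalar $\alpha_k(v)$ with $P_{N_k}\, \overline{v} = \alpha_k(v)\, \phi_k u$.

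The rest is short. Linear independence of $\{u_k\}_{k=1}^\infty$ (hence $\dim M = \infty$) is immediate: a vanishing finite combination forces every $\alpha_k$ to vanish. If $v \in M$ is nonzero, pick $x \in E$ with $\overline{v}(x) \neq 0$; the unconditional expansion $\overline{v}(x) = \sum_k P_{N_k}\overline{v}(x)$ ensures that $\alpha_k(v) \neq 0$ for at least one $k$. Assuming for contradiction that $v \in {\cal J}(E;F)$, the pointwise estimate
$$\|\alpha_k(v)\, \phi_k u(x)\|_X = \|P_{N_k} (j|_{j(X)})^{-1} v(x)\|_X \leq \|P_{N_k}\| \cdot \|(j|_{j(X)})^{-1}\| \cdot \|v(x)\|_F$$
together with Lemma \ref{lemapietsch} applied to the injective ideal ${\cal J}$ gives $\alpha_k(v)\,\phi_k u \in {\cal J}(E;X)$, and therefore $\phi_k u \in {\cal J}(E;X)$. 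A second application of Lemma \ref{lemapietsch} to the inequality $\|u(x)\|_X \leq \|\phi_k^{-1}\| \cdot \|\phi_k u(x)\|_X$ then produces $u \in {\cal J}(E;X)$, contradicting the choice of $u$.

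The main obstacle is the second paragraph: extending coefficient-extraction from finite combinations to arbitrary limits in the ${\cal I}$-norm rests on both the closedness of $j(X)$ in $F$ and the uniform boundedness of the basis projections $P_{N_k}$ (provided by unconditionality). Once this structure is in place, the injectivity of ${\cal J}$, packaged in Lemma \ref{lemapietsch}, converts the disjoint-support geometry of the $u_k$ into the required non-membership $v \notin {\cal J}(E;F)$.
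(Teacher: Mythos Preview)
Your proof is correct and follows essentially the same approach as the paper: both build disjointly supported copies $u_k = j \circ \phi_k \circ u$ over a block decomposition of the weakly overcomplete basis, use the bounded basis projections to extract coefficients from limits of finite combinations, and invoke Lemma~\ref{lemapietsch} (injectivity of $\mathcal{J}$) to transfer non-membership. Your presentation is arguably a bit cleaner in that you record the operator identity $P_{N_k}\overline{v} = \alpha_k(v)\,\phi_k u$ globally before specializing, whereas the paper works at a chosen point $x_0$ first and then derives the pointwise domination of $u_k$ by $v$; the substance is identical.
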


\begin{proof} 
Let $(e_{n})_{n=1}^\infty$ be a weakly overcomplete unconditional basis of $X$ with unconditional basis constant $\varrho$.
Split $\mathbb{N} = \bigcup\limits_{j=1}^\infty A_j$ into infinitely many infinite pairwise disjoint subsets. For each $j \in \mathbb{N}$, define $X_j =  \overline{{\rm span}\{e_n : n \in A_j\}}$ and let $P_j \colon X \longrightarrow X_j$ be the canonical projection. It is known that $\|P_j\| \leq \varrho$ \cite[Corollary 4.2.26]{meg}. For $x_j \in X_j$ we have $P_i(x_j) = \delta_{ij}x_j$ because the sets $(A_j)_{j=1}^\infty$ are pairwise disjoint.  Let $I_j \colon X \longrightarrow X_j$ be an isomorphism, $T_j \colon X_j \longrightarrow X$ denote the formal inclusion and $T \colon X \longrightarrow F$ be an isomorphism into. Let $u \in {\cal I}(E;X) - {\cal J}(E;X)$. Defining
$$u_j \colon E \longrightarrow F~,~u_j = T\circ T_j \circ I_j \circ u, $$
we have $u_j \in {\cal I}(E,F)$. Using that $\cal J$ is injective, $u \notin {\cal J}(E;X)$ and
\begin{align*}\|u_j(x)\| &= \|T(T_j \circ I_j \circ u(x))\| \geq \frac{1}{\|T^{-1}\|}\|T_j \circ I_j \circ u(x)\| \geq \frac{1}{\|T^{-1}\|\cdot \|I_j^{-1}\|}\|u(x)\|
\end{align*}
for every $x \in E$, we conclude by Lemma \ref{lemapietsch} that each $u_j \notin {\cal J}(E;F)$. In particular, $u_j \neq 0$. Let $Y:=\overline{{\rm span}\{u_j : j \in \mathbb{N}\}}^{\|\cdot\|_{\cal I}} \subseteq {\cal I}(E;F)$. Given $0 \neq v \in Y$, let $(v_n)_{n=1}^\infty \subseteq {\rm span}\{u_j : j \in \mathbb{N}\}$ be such that $v_n \stackrel{\|\cdot\|_{\cal I}}{\longrightarrow} v$. For each $n$, write $v_n = \sum\limits_{j=1}^\infty a_j^n u_j$, where $a_j^n \neq 0$ for only finitely many $j$'s. Let $x_0 \in E$ be such that $v(x_0) \neq 0$. It is plain that $v(E) \subseteq T(X)$, so $T^{-1}(v(x_0)) \neq 0$, and in this case there is $k \in \mathbb{N}$ such that $P_k(T^{-1}(v(x_0))) \neq 0$. Since $\|\cdot\| \leq \|\cdot\|_{\cal I}$, we have $v_n(x) \longrightarrow v(x)$ for every $x \in E$. So,
\begin{align*} a_k^n \,T_k(I_k(u(x_0))) & = \sum_{j=1}^\infty P_k\left(a_j^n\,T_j(I_j(u(x_0))) \right) = \sum_{j=1}^\infty P_k\left(T^{-1}(a_j^n\,T(T_j(I_j(u(x_0))))) \right)\\
& = P_k \circ T^{-1}\left( v_n(x_0) \right) \longrightarrow P_k \circ T^{-1} \circ v(x_0) \neq 0.
\end{align*}
It follows that
$$0 \neq T \circ P_k \circ T^{-1} \circ v(x_0) = \lim_n T\left(  a_k^n \,T_k(I_k(u(x_0)))\right) = \lim_n a_k^n u_k(x_0) = \left(\lim_n a_k^n\right) u_k(x_0) . $$
Calling $\lambda := \lim\limits_n a_k^n \neq 0$, we have
\begin{align*}\|u_k&(x)\| =  \frac{1}{|\lambda|}\cdot \lim_n\|a_k^n u_k(x)\|\leq 
\frac{\|T\|}{|\lambda|}\cdot \lim_n\left\| P_k\left(\sum_{j=1}^\infty T_j \circ I_j\circ u(a_j^nx)\right)\right\| \\&\leq \frac{\varrho\|T\|}{|\lambda|}\cdot \lim_n \left\|\sum_{j=1}^\infty T_j \circ I_j \circ u(a_j^nx)\right\| \leq \frac{\varrho\|T\|\cdot\|T^{-1}\|}{|\lambda|}  \cdot \lim_n\left\|T\left(\sum_{j=1}^\infty T_j \circ I_j \circ u(a_j^nx)\right)\right\|\\
& = \frac{\varrho\|T\|\cdot\|T^{-1}\|}{|\lambda|}  \cdot \lim_n \left\|\sum_{j=1}^\infty a_j^n u_j(x)\right\| =  \frac{\varrho\|T\|\cdot\|T^{-1}\|}{|\lambda|} \|v(x)\|
\end{align*}
for every $x \in E$. Since $u_k$ does not belong to the injective ideal $\cal J$,  it follows from Lemma \ref{lemapietsch} that $v \notin {\cal J}(E;F)$. This proves that $Y \subseteq ({\cal I}(E;F) - {\cal J}(E;F) \cup \{0\})$.

Given $n \in \mathbb{N}$, scalars $a_1, \ldots, a_n$ such that $\sum\limits_{j=1}^n a_j u_j = 0$ and $k \in \{1, \ldots, n\}$, let $x_k \in E$ be such that $u_k(x_k) \neq 0$ (recall that $u_k \neq 0$). From
\begin{align*}0 & = \left\|\sum\limits_{j=1}^n a_j u_j(x_k)\right\|\geq \frac{1}{\|T^{-1}\|}\left\|\sum\limits_{j=1}^n a_j (T_j \circ I_j \circ u)(x_k)\right\| \\&\geq \frac{1}{\varrho\|T^{-1}\|}\left\|P_k\left(\sum\limits_{j=1}^n a_j (T_j \circ I_j \circ u)(x_k)\right)\right\| = \frac{1}{\varrho\|T^{-1}\|}\left\| a_k (T_k \circ I_k \circ u)(x_k)\right\|\\
&\geq \frac{1}{\varrho\|T^{-1}\|\cdot\|T\|}\left\|T( a_k (T_k \circ I_k \circ u)(x_k))\right\|=  \frac{1}{\varrho\|T^{-1}\|\cdot\|T\|} |a_k| \cdot \|u_k(x_k)\|,
\end{align*}
it follows that $a_k = 0$,  proving that the set $\{u_j : j \in \mathbb{N}\}$ is linearly independent.
\end{proof}

\begin{remark}\rm (a) Proposition \ref{lemmalin} is not a consequences of \cite[Proposition 2.4]{timoney} because we are not assuming neither that $({\cal I}\cap{\cal J})(E;F)$ is not closed in ${\cal I}(E;F)$ nor that ${\cal I}(E;F)$ is complete.\\
(b) A result related to Proposition \ref{lemmalin}, with different assumptions, has appeared recently in \cite[Theorem 3.5]{espanhois}.
\end{remark}

Recall that Space($\cal I$) denotes the class of all Banach spaces $E$ such that the identity operator on $E$ belongs to the operator ideal $\cal I$ (cf. \cite[2.1.2]{pietschlivro}).

\begin{theorem} Let $E$ be isomorphic to a subspace of $L_1(\mu)$ for some Borel measure $\mu$, let $F$ contain an isomorphic copy of $\ell_1$ and let $({\cal I}, \|\cdot\|_{\cal I})$ be a Banach operator ideal such that $\ell_1 \in {\rm Space}({\cal I})$. Then there exists an infinite dimensional Banach space formed, up to the null operator, by non-$1$-summing linear operators from $E$ to $F$ belonging to $\cal I$.
\end{theorem}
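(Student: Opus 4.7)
The strategy is to apply Proposition~\ref{lemmalin} with $X=\ell_{1}$ and $\mathcal{J}=\Pi_{1}$. Three inputs are required: the classical ideal $\Pi_{1}$ is injective; the canonical basis of $\ell_{1}$ is a weakly overcomplete unconditional basis by Example~\ref{examp}; and $F$ contains an isomorphic copy of $\ell_{1}$ by hypothesis. Once we verify that $\mathcal{I}(E;\ell_{1})\setminus\Pi_{1}(E;\ell_{1})$ is non-empty, Proposition~\ref{lemmalin} will yield a closed infinite-dimensional subspace of $(\mathcal{I}(E;F),\|\cdot\|_{\mathcal{I}})$ whose non-zero elements are non-$1$-summing operators belonging to $\mathcal{I}$, which is exactly the Banach space sought.

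The first step is to note that the hypothesis $\ell_{1}\in\mathrm{Space}(\mathcal{I})$, together with the ideal property, forces $\mathcal{I}(E;\ell_{1})=\mathcal{L}(E;\ell_{1})$: any $u\in\mathcal{L}(E;\ell_{1})$ factors as $u=\mathrm{id}_{\ell_{1}}\circ u$, and since $\mathrm{id}_{\ell_{1}}\in\mathcal{I}(\ell_{1};\ell_{1})$ and $u$ is bounded, the ideal property puts $u$ in $\mathcal{I}(E;\ell_{1})$. Hence the whole task reduces to exhibiting a single bounded operator $u\colon E\to\ell_{1}$ which is not absolutely $1$-summing.

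For this construction I would split cases using the Kadec--Pe\l czy\'nski dichotomy applied to an isomorphic embedding $j\colon E\hookrightarrow L_{1}(\mu)$. If $E$ contains an isomorphic copy of $\ell_{1}$, then (after passing to a suitable subsequence if necessary) one finds a copy $Y\subseteq E$ of $\ell_{1}$ that is complemented in $L_{1}(\mu)$ via a bounded projection $P\colon L_{1}(\mu)\to Y$; the composition $u:=P\circ j\colon E\to Y\cong\ell_{1}$ is then bounded and restricts to the identity on $Y$, so the subspace-stability of $\Pi_{1}$ together with Dvoretzky--Rogers gives $u\notin\Pi_{1}(E;\ell_{1})$. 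In the complementary case, $E$ contains no copy of $\ell_{1}$, and by Rosenthal's $\ell_{1}$-theorem together with Kadec--Pe\l czy\'nski $E$ is reflexive; being a reflexive subspace of $L_{1}(\mu)$, $E$ is superreflexive, and Timoney's theorem in \cite{timoney} directly supplies an infinite-dimensional Banach space of non-$1$-summing operators $E\to\ell_{1}$, from which we extract the required single operator.

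The main obstacle is the novel non-reflexive branch $E\supseteq\ell_{1}$; the critical external input there is that any isomorphic copy of $\ell_{1}$ inside $L_{1}(\mu)$ is always, possibly after passage to a subsequence, complemented in $L_{1}(\mu)$, which is what converts the internal $\ell_{1}\subseteq E$ into a bounded, non-$1$-summing operator $E\to\ell_{1}$ via the projection trick; the reflexive branch is essentially reduction to Timoney's earlier result.
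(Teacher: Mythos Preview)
Your overall framework matches the paper's: both reduce to exhibiting a single operator $u\in\mathcal{L}(E;\ell_1)\setminus\Pi_1(E;\ell_1)$, observe that $\ell_1\in\mathrm{Space}(\mathcal{I})$ forces $u\in\mathcal{I}(E;\ell_1)$, and then invoke Proposition~\ref{lemmalin} with $X=\ell_1$ and $\mathcal{J}=\Pi_1$ (injective). The divergence is entirely in how the operator $u$ is produced.

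The paper's route is short and internal to the machinery it develops: since $E$ embeds in $L_1(\mu)$, Theorem~\ref{coinc}(ii) gives $id_E\in\Pi_1^{mid}(E;E)$; since $E$ is infinite dimensional, $id_E\notin\Pi_1(E;E)$; the factorization $\Pi_1=\Pi_1^{mid}\circ W_1^{mid}$ of Theorem~\ref{thefact} then forces $id_E\notin W_1^{mid}(E;E)$, so by Theorem~\ref{revis} (or directly Theorem~\ref{coinc}(i)) one has $\Pi_1(E;\ell_1)\neq\mathcal{L}(E;\ell_1)$, and any witness $u$ works. No case split, no structure theory of $L_1$-subspaces.

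Your route is correct but imports heavy external results: the Kadec--Pe\l czy\'nski/Rosenthal complementation of $\ell_1$ in $L_1$ for the non-reflexive branch, and Rosenthal's theorem that reflexive subspaces of $L_1$ are superreflexive (plus the Kitson--Timoney spaceability result) for the reflexive branch. Two minor points of care: in your first branch, $u=P\circ j$ restricts on $Y$ to the isomorphism $j|_Y$, not the identity, though the Dvoretzky--Rogers contradiction goes through unchanged; and the complementation and superreflexivity statements are usually proved for $L_1$ of a probability (or $\sigma$-finite) measure, so a reduction step is implicitly needed. The payoff of the paper's argument is that it showcases the $\ell_p^{mid}$ technology as the actual engine; your argument, while valid, bypasses that technology entirely and would stand independently of Sections~\ref{sec1}--\ref{sec2}.
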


\begin{proof} By Theorem \ref{coinc}(ii), $id_E \in \Pi_1^{mid}(E;E)$. Since $id_E$ fails to be 1-summing, because $E$ is infinite dimensional, by Theorem \ref{thefact} we have $id_E \notin W_1^{mid}(E;E)$. From Theorem \ref{coinc}(i), there is a non-1-summing linear operator $u \colon E \longrightarrow \ell_1$. Of course $u \in {\cal I}(E;\ell_1)$. Taking into account that the canonical unit vectors form a weakly overcomplete unconditional basis of $\ell_1$ (Example \ref{examp}) and that the ideal of absolutely $p$-summing linear operators is injective, from Proposition \ref{lemmalin} we have that ${\cal I}(E;F) - \Pi_1(E;F)$ is spaceable. The completeness of $({\cal I}(E;F), \|\cdot\|_{\cal I})$ finishes the proof.
\end{proof}

Examples of Banach operator ideals $\cal I$ for which $\ell_1 \in {\rm Space}({\cal I})$ are the following: separable operators, completely continuous operators, cotype 2 operators, absolutely $(r,q)$-summing operators with $\frac{1}{r} \leq \frac{1}{q} - \frac12$ \cite[Corollary 8.9]{df} (in particular, absolutely $(r,1)$-summing operators for every $r \geq 2$).\\

\noindent{\bf Acknowledgement.} We thank Professor A. Pietsch for pointing out Lemma \ref{lemapietsch} to us.

\bigskip

\noindent Faculdade de Matem\'atica\\
Universidade Federal de Uberl\^andia\\
38.400-902 -- Uberl\^andia -- Brazil\\
e-mail: botelho@ufu.br\\

\noindent Departamento de Ci\^{e}ncias Exatas\\
Universidade Federal da Para\'iba\\
58.297-000 -- Rio Tinto -- Brazil\\
e-mail: jamilson@dcx.ufpb.br and jamilsonrc@gmail.com\\

\noindent Departamento de Matem\'atica\\
Universidade Federal da Para\'iba\\
58.051-900 -- Jo\~ao Pessoa -- Brazil\\
e-mail: joedsonmat@gmail.com


\begin{thebibliography}{99}\small

\vspace*{-0.4em}
\bibitem{bams} L. Bernal-Gonz\'alez, D. Pellegrino and J.B. Seoane-Sep\'ulveda, {\it Linear subsets of nonlinear sets in topological vector spaces}, Bull. Amer. Math. Soc. (N.S.) {\bf 51} (2014), 71--130.

\vspace*{-0.4em}
\bibitem{diogo} G. Botelho, D. Diniz and D. Pellegrino, {\it Lineability of the set of bounded linear non-absolutely summing operators}, J. Math. Anal. Appl. {\bf 357} (2009), 171--175

\vspace*{-0.4em}
\bibitem {botelhocampos}G. Botelho and J.R. Campos, {\it On the transformation of vector-valued sequences by multilinear operators}, arXiv:1410.4261 [math.FA], 2014.

\vspace*{-0.4em}
\bibitem{jamilson13} J.R. Campos, {\it An abstract result on Cohen strongly summing operators}, Linear Algebra Appl. {\bf 439} (2013), 4047--4055.

\vspace*{-0.4em}
\bibitem{chalendar} I. Chalendar and J.R. Partington, {\it Applications of moment problems to the overcompleteness of sequences}, Math. Scand. {\bf 101} (2007), 249--260.

\vspace*{-0.4em}
\bibitem {cohen73} J.S. Cohen, {\it Absolutely $p$-summing, $p$-nuclear operators and their conjugates}, Math. Ann. \textbf{201} (1973), 177--200.

\vspace*{-0.4em}
\bibitem{df} A. Defant and K. Floret, {\it Tensor Norms and Operator Ideals}, North-Holland, 1993.

\vspace*{-0.4em}
\bibitem{djt} J. Diestel, H. Jarchow and A. Tonge, {\it Absolutely Summing Operators}, Cambridge University Press, 1995.

\vspace*{-0.4em}
\bibitem{fabian} M. Fabian, P. Habala, P. H\'ajek, V. Montesinos and V. Zizler, {\it Banach Space Theory: The Basis for Linear and Nonlinear Analysis}, Springer, 2011.

\vspace*{-0.4em}
\bibitem{fonf} V.P. Fonf and C. Zanco, {\it Almost overcomplete and almost overtotal sequences in Banach spaces}, J. Math. Anal. Appl. {\bf 420} (2014), 94--101.

\vspace*{-0.4em}
\bibitem{espanhois} F.L. Hern\'andez, C. Ruiz and V.M. S\'anchez, {\it Spaceability and operator ideals}, J. Math. Anal. Appl. {\bf 431} (2015), 1035--1044.

\vspace*{-0.4em}
\bibitem{karnsinha} A. Karn and D. Sinha, {\it An operator summability of sequences in Banach spaces}, Glasg. Math. J. {\bf 56} (2014), no. 2, 427--437.

\vspace*{-0.4em}
\bibitem{timoney} D. Kitson and R.M. Timoney, {\it Operator ranges and spaceability}, J. Math. Anal. Appl. {\bf 378} (2011), 680--686.

\vspace*{-0.4em}
\bibitem{meg} R.E. Megginson, {\it An Introduction to Banach Space Theory}, Springer, 1998.

\vspace*{-0.4em}
\bibitem{pietschlivro} A. Pietsch, {\it Operator Ideals}, North-Holland, 1980.

\end{thebibliography}
\end{document}